\theoremstyle{definition}
\newtheorem*{defin*}{Definition}
\theoremstyle{plain}
\newtheorem{corollary}{Corollary}
\newtheorem{prop}{Proposition}
\newtheorem{remark}{Remark}
\newtheorem{lemma}{Lemma}
\newtheorem*{teo*}{Theorem}
\newtheorem{teo}{Theorem}
\newtheorem{conjecture}{Conjecture}
\newcommand{\Z}{\mathbb{Z}}
\newcommand{\Q}{\mathbb{Q}}
\newcommand{\R}{\mathbb{R}}
\newcommand{\C}{\mathbb{C}}
\newcommand{\T}{\mathbb{T}}
\newcommand{\scalar}[2]{\langle #1, #2 \rangle}
\def\imod#1{\allowbreak\mkern10mu({\operator@font mod}\,\,#1)}
\newcommand{\mm}[2]{\llbracket #1 \rrbracket_{#2}}
\newcommand{\defeq}{\stackrel{\mathrm{def}}{=}}
\newcommand{\Oh}{\mathcal{O}}
\begin{document}

\title[An Algebraic Kronecker's Theorem and Linear Recurrences]{An equivalent of Kronecker's
Theorem for powers of an Algebraic Number and Structure of Linear Recurrences of fixed length}
\author[N. Dubbini]{Nevio Dubbini}
\address{Interdepartmental Research Center ``E.Piaggio'' - Via Diotisalvi 2 - 56126 Pisa}
\email{nevio.dubbini@for.unipi.it}

\author[M. Monge]{Maurizio Monge}
\address{Scuola Normale Superiore di Pisa - Piazza dei Cavalieri, 7 - 56126 Pisa}
\email{maurizio.monge@sns.it}
\subjclass[2000]{Primary: 11K06, 11K60, Secondary: 11G50, 11H31}
\keywords{Kronecker Theorem, epsilon density, recurrence sequence, density modulo 1, algebraic
  dynamics, Mahler measure, p-adic, Toeplitz matrix}
\date{\today}

\begin{abstract}
  After defining a notion of $\epsilon$-density, we provide for any real algebraic number $\alpha$
  an estimate of the smallest $\epsilon$ such that for each $m>1$ the set of vectors of the form
  $(t,t\alpha,\dots,t\alpha^{m-1})$ for $t\in\R$ is $\epsilon$-dense modulo $1$, in terms of the
  multiplicative Mahler measure $M(A(x))$ of the minimal integral polynomial $A(x)$ of $\alpha$,
  and independently of $m$. In particular, we show that if $\alpha$ has degree $d$ it is possible
  to take $\epsilon = 2^{[d/2]}/M(A(x))$.

  On the other hand using asymptotic estimates for Toeplitz determinants we show that
    for sufficiently large $m$ we cannot have $\epsilon$-density if $\epsilon$ is a fixed
    number strictly smaller than $1/M(A(x))$. As a byproduct of the proof we obtain a result
  of independent interest about the structure of the $\Z$-module of integral linear recurrences of
  fixed length determined by a non-monic polynomial.
\end{abstract}

\maketitle

\section{Introduction}

The classical Kronecker's Theorem in diophantine approximation, in one of its different versions,
says that if $\theta_1,\dots,\theta_m\in{}\R$ are linearly independent over $\Q$ then
$(t\theta_1,\dots,t\theta_m)$ is dense modulo $1$. This means, denoting by
\[
 \pi_m : \R^m \rightarrow \R^m/\Z^m = \T^m
\]
the canonical projection, that
$\left\{\pi_m(t\theta_1,\dots,t\theta_m),\ t \in \R\right\}$ is dense in $\T^m$.

In this paper we investigate the problem of giving an estimate about how much this results
  fails when the $\theta_i$ have special form. In particular, we give an estimate, uniformly
  in $m$, when the $(\theta_i)$ are powers of an algebraic number $\alpha$, or more
  generally when they statisfy a general linear recurrence relation with characteristic polynomial
$A(x)$ of degree $d$, a condition which is automatically satisfied if $\theta_i=\alpha^{i-1}$
and $A(x)$ is the minimal integral polynomial of $\alpha$. We specify that a sequence
$\theta_1,\theta_2,\dots$ is a recurrence sequence determined by the polynomial
$a_dx^d+\dots+a_1x+a_0$ when $\sum_{i=0}^d\theta_{i+j}a_i=0$ for each $j>1$.

Before stating our main result, we give a definition of $\epsilon$-density and recall the notion of
Mahler measure.

\subsection{\texorpdfstring{$\epsilon$}{epsilon}-density} For $\epsilon >0$ and a positive integer
$m$, let $I_\epsilon=\left[-\nicefrac{\epsilon}{2},\nicefrac{\epsilon}{2}\right]^m$ be
  the cube with edge lenght $\epsilon$ centered in the origin, and let
$C_\epsilon=\pi_m\left(I_\epsilon\right)\subseteq\T^m$ be its projection on the torus.

\begin{defin*}
  A subset $S\subseteq \T^m$ is \emph{$\epsilon$-dense} if $S+C_{\bar{\epsilon}}=\T^m$ for
  each $\bar{\epsilon}>\epsilon$, or equivalently if $S+C_\epsilon$ is dense. A subset
  $T\in\R^m$ is $\epsilon$-dense if $T+I_{\bar{\epsilon}}=\R^m$ whenever $\bar\epsilon >
  \epsilon$, or equivalently if $T+I_\epsilon$ is dense.
\end{defin*}

Alternatively, it is possible to consider on $\T^m$ the distance $d_\infty(x,y)$ defined as the
infimum of the $\infty$-distance $|\tilde x - \tilde y|_\infty$ over all
representatives $\tilde{x},\tilde{y}\in\R^m$ of $x,y$. Since for each $\rho>0$ the
$\rho/2$-neighborhood of $S$ with respect to $d_\infty(\cdot,\cdot)$ is precisely $S+C_\rho$, we
have that $S$ will be $\epsilon$-dense if and only if its $\bar{\epsilon}/2$-neighborhood is
the whole $\T^m$ for each $\bar{\epsilon} >\epsilon$ (or, equivalently, if its
$\epsilon/2$-neighborhood is dense).

\subsection{Mahler measure}
{Let} $A(x)=\sum_{i=0}^da_ix^i=a_d\cdot\prod_{i=1}^d\big(x-\alpha_i\big)$ be a
  polynomial with complex coefficients such that $a_0\neq{}0$. The \emph{Mahler measure} of $A(x)$
is defined as
\[
  M(A) = M(A(x)) = \left|a_d\right| \cdot \prod_{i=1}^d \max\big\{1,|\alpha_i|\big\}.
\]
It is a notion of complexity that for the minimal polynomial of a rational number $p/q$ with
$(p,q)=1$ reduces to $\max(|p|,|q|)$. Furthermore, each coefficient $a_i$ can be written as the sum
of $\binom{d}{i}$ products of some elements among $a_d,\alpha_1,\dots,\alpha_d$, each being
taken at most once, and we have that $a_i \leq \binom{d}{i} \cdot M(A)$, for $i=0,\dots,d$.

\subsection{Netwon polygon}\label{newtsec}

For a prime $p$ let $v_p(\cdot)$ be the $p$-adic valuation, and let us recall that the Newton
polygon of an integral primitive polynomial $A(x)$ is the boundary of the polygon in $\R^2$ obtained
as the upper convex envelop of the points with coordinates
\[
  (0,v_p(a_0)), (1,v_p(a_1)),\dots, (d,v_p(a_d)).
\]
Suppose that the polygon has $r$ edges with different slopes which connect the consecutive pairs of
points $(w_k, v_p(a_{w_k}))$ for $0 = w_0<w_1<\dots<w_r = d$, let $\ell_k = w_k - w_{k-1}$ be the
length of the horizontal projection of the $k$-th edge, and let
$\sigma_k=(v_p(a_{w_k})-v_p(a_{w_{k-1}}))/\ell_k$ be its slope. We have that $\sigma_i < \sigma_j$
whenever $1\leq i < j \leq d$.

\begin{center}
\[
\setlength{\unitlength}{20pt}
\begin{picture}(7,4.5)(0,-5.5)
 \dashline[-15]{0.15}(0,-2)(4,-2)
 \dashline[-15]{0.15}(0,-3)(3,-3)
 \dashline[-15]{0.15}(4,-2)(4,-5.5)
 \dashline[-15]{0.15}(3,-3)(3,-5.5)
 \dashline[-15]{0.15}(1,-4)(1,-5.5)

 \drawline(0,-3)(1,-4)
 \drawline(1,-4)(3,-3)
 \drawline(3,-3)(4,-2)
 \put(0,-3){\circle*{0.15}}
 \put(1,-4){\circle*{0.15}}
 \put(3,-3){\circle*{0.15}}
 \put(4,-2){\circle*{0.15}}
 \put(2,-2){\circle*{0.15}}

 \put(-1,-4){\vector(1,0){6.5}}
 \put(0,-5.5){\vector(0,1){5}}
 
 \put(-0.4,-3.1){\scriptsize{$1$}}
 \put(-0.4,-2.1){\scriptsize{$2$}}
 \put(-1.3,-1.2){\scriptsize{$v_3(a_i)$}}

 \put(4.9,-4.5){\scriptsize{$i$}}
 \put(0.05,-4.4){\scriptsize{$w_0$}}
 \put(1.05,-4.4){\scriptsize{$w_1$}}
 \put(3.05,-4.4){\scriptsize{$w_2$}}
 \put(4.05,-4.4){\scriptsize{$w_3$}}

 \put(0.35,-5){\scriptsize{$\ell_1$}}
 \put(1.85,-5){\scriptsize{$\ell_2$}}
 \put(3.35,-5){\scriptsize{$\ell_3$}}
 \put(0.5,-5.2){\vector(1,0){0.5}}
 \put(0.5,-5.2){\vector(-1,0){0.5}}
 \put(2,-5.2){\vector(1,0){1}}
 \put(2,-5.2){\vector(-1,0){1}}
 \put(3.5,-5.2){\vector(1,0){0.5}}
 \put(3.5,-5.2){\vector(-1,0){0.5}}

\end{picture} \]
{\bf Newton polygon of $A(x) = 9x^4 - 3x^3 -9x^2 - 2x + 3$ for $p=3$.}
\end{center}

We will consider a splitting field $K$ of $A(x)$ over the field of $p$-adic numbers $\Q_p$, and
we extend the $p$-adic valuation and absolute value to $K$. As usual, we call
$\mathcal{O}_K$ the integral closure of $\Z_p$ in $K$, which is the local ring formed by the
elements having non-negative valuation, with maximal ideal $\mathfrak{m}$ generated by the
uniformizer $\pi$.  As it is well known, the polynomial $A(x)$ has precisely $\ell_k$ roots with
$p$-adic valuation equal to $-\sigma_k$, for $1\leq k \leq r$.

\subsection{Main results} We now state our main results:
\begin{teo}
\label{teo1}
Let $m, d>0$ be positive integers, and let $\theta_1,\dots,\theta_m$ be real numbers such that
$\theta_1,\dots,\theta_d$ are linearly independent over $\Q$, and the remaining
$\theta_{d+1},\dots,\theta_m$ are inductively defined by the linear recurrence relation
  induced by a primitive integral polynomial $A(x)$ of degree $d$ and with nonzero constant
coefficient. Then
\[
  S_\theta = S_{\theta,m} = \left\{\pi_m(t\theta_1,t\theta_2,\dots,t\theta_m),\ t \in \R\right\}
   \subseteq \T^m
\]
is $\epsilon$-dense for each $\epsilon$ at least
\[
  \min\left\{\frac{1}{M(A(x/2))}, \frac{2^d}{M(A(2x))}\right\}.
\]
\end{teo}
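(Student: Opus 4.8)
The plan is to replace the winding line $S_\theta$ by its closure, identify that closure as a subtorus coming from the recurrence, and then reduce $\epsilon$-density to a covering-radius estimate attacked with the recurrence and with the Mahler measure.

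\emph{Reduction to a covering radius.} Let $W\subseteq\R^m$ be the $d$-dimensional real vector space of all $(v_1,\dots,v_m)$ with $\sum_{i=0}^d a_iv_{i+j}=0$ for $1\le j\le m-d$; by construction $(\theta_1,\dots,\theta_m)\in W$. Since $A$ has integer coefficients $W$ is rational, so $\Lambda_W:=W\cap\Z^m$ is a lattice of full rank $d$ in $W$ and $\pi_m(W)\cong W/\Lambda_W$ is a closed $d$-dimensional subtorus of $\T^m$. Reading off the first $d$ coordinates identifies $W$ with $\R^d$ and $\Lambda_W$ with a finite-index sublattice $L\subseteq\Z^d$, and $(\theta_1,\dots,\theta_m)$ with $(\theta_1,\dots,\theta_d)$; since the latter is $\Q$-linearly independent, no nonzero rational vector is orthogonal to it, in particular no nonzero element of the dual lattice of $L$ is, so by Kronecker's theorem the line $\R(\theta_1,\dots,\theta_d)$ is dense modulo $L$ and $\overline{S_\theta}=\pi_m(W)$. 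Because a subset of $\T^m$ and its closure are $\epsilon$-dense for the same values of $\epsilon$, $S_\theta$ is $\epsilon$-dense if and only if the $\ell^\infty$-covering radius of the closed subgroup $W+\Z^m\subseteq\R^m$ is at most $\epsilon/2$; i.e. if and only if for every $\bar\epsilon>\epsilon$ and every $\mathbf y\in\R^m$ there exist $\mathbf w\in W$ and $\mathbf k\in\Z^m$ with $|y_i-w_i-k_i|\le\bar\epsilon/2$ for all $i$.

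\emph{The estimate.} It remains to produce, for an arbitrary target $\mathbf y$, such an approximant. Three freedoms are at hand: the $d$ real parameters of $\mathbf w\in W$, the integers $k_i$ (which may be chosen independently in each coordinate), and the fact that $a_0a_d\ne0$ lets one run the recurrence both forwards and backwards. I would build $\mathbf w$ by propagating the recurrence from rounded boundary data in a contracting direction. Passing to a splitting field and writing the complexification of $W$ as a span of the geometric solutions $(\alpha_\ell^{\,n-1})_n$ — modified by polynomial prefactors of degree $<$ multiplicity when $A$ is not squarefree — one splits the roots by size relative to a threshold, propagates the contributions of the small roots from the left and of the large roots from the right, and controls the rounding error at each coordinate; since the entries shrink geometrically in the contracting direction, the worst-coordinate error is bounded by a convergent geometric series. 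Using the elementary estimates $|a_i|\le\binom di M(A)$ to bound the individual recurrence steps (and noting $\sum_i\binom di=2^d$, which is where that factor appears), this sum works out to $\tfrac12\cdot 2^d/M(A(2x))$ for the recurrence of $A$; running the same argument for the reciprocal polynomial $A^\ast(x)=x^dA(1/x)$ (equivalently, propagating backwards) gives $\tfrac12\cdot 2^d/M(A^\ast(2x))$, and a short Mahler-measure computation using $\prod_\ell|\alpha_\ell|=|a_0/a_d|$ identifies this with $\tfrac12/M(A(x/2))$. Taking the better of the two, and recalling $M(A(x/2))=|a_d|\prod_\ell\max(\tfrac12,|\alpha_\ell|)$ and $M(A(2x))=|a_d|2^d\prod_\ell\max(1,\tfrac12|\alpha_\ell|)$, yields the stated bound.

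\emph{Main obstacle and loose ends.} The delicate point is the interaction between the integer vector $\mathbf k$ and the requirement $\mathbf w\in W$: because $W$ need not factor over $\Q$ into the parts coming from roots inside and outside a given circle, one cannot round the coordinates of $\mathbf y$ independently and hope the result still satisfies the recurrence, so the contracting propagation must be carried out compatibly with one global choice of $\mathbf k$. I would handle this by fixing $\mathbf k$ first, so that $\mathbf y-\mathbf k$ lies in a convenient fundamental domain, and only then solving for $\mathbf w$; this amounts to understanding the lattice $B(\Z^m)$ inside $\R^{m-d}\cong\R^m/W$, where $B\colon\mathbf v\mapsto\bigl(\sum_{i=0}^d a_iv_{i+j}\bigr)_{1\le j\le m-d}$ is the banded Toeplitz map, whose index and structure — exactly the structure of the $\Z$-module of integral recurrences of fixed length determined by the non-monic $A$ — are governed by the Newton polygons of $A$ at the primes dividing $a_d$, as set up in Section~\ref{newtsec}. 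The boundary cases are routine: repeated roots are absorbed into the $O(1)$ boundary constants, and for $m\le d$ the claim is either vacuous or follows directly from the classical Kronecker theorem applied to $\theta_1,\dots,\theta_m$.
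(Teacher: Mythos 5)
Your reduction step is sound and matches the paper in substance: the closure of $S_\theta$ is the projection of the rational solution space $W$ of the recurrence, so $\epsilon$-density is equivalent to an approximation statement for $W+\Z^m$ (in the paper's notation, for $Q=\{v:\mm{A}{m-d}\cdot v\in\Z^{m-d}\}$). But the quantitative half of your argument is not a proof: the ``Estimate'' paragraph asserts that a contracting forward/backward propagation from rounded boundary data ``works out to'' $\tfrac12\cdot 2^d/M(A(2x))$, and your own ``Main obstacle'' paragraph concedes the point on which this hinges --- the integer vector $\mathbf k$ cannot be chosen coordinatewise compatibly with the constraint $\mathbf w\in W$ --- without resolving it. The fix you sketch (fix $\mathbf k$ first by understanding the lattice $\mm{A}{m-d}(\Z^m)$ via Newton polygons) is a substantially harder route that you do not carry out; in the paper that structure theory (Theorem \ref{teo3}) is needed only for the lower bound of Theorem \ref{teo2}, not here. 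Moreover your explanation of where the constant comes from is off: the factor $2^d$ in $2^d/M(A(2x))$ is not the binomial sum $\sum_i\binom{d}{i}=2^d$ bounding the coefficients; it is a rescaling artifact, since $2^{-d}M(A(2x))=M(\tilde A(x/2))$ for the reciprocal polynomial $\tilde A(x)=x^dA(1/x)$, and that branch of the minimum arises simply by applying the other branch to the coordinate-reversed problem. Your sketch never makes the choice of threshold (roots of modulus $\le 1/2$) that actually produces the factors $\max\{\tfrac12,|\alpha_i|\}$ in $M(A(x/2))$, and a naive geometric-series/recurrence-step estimate with $|a_i|\le\binom di M(A)$ would pick up extra $d$-dependent combinatorial losses rather than the clean stated bound.

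For comparison, the paper circumvents your obstacle by dualizing rather than approximating in $\R^m$: $Q$ is $\epsilon$-dense iff the cube $I_{\epsilon,m}$ surjects onto $\T^{m-d}$ under $\pi_{m-d}\circ\mm{A}{m-d}$. One then factors $A=BC$ over $\R$ with $C$ monic collecting exactly the roots of modulus $\le\tfrac12$, and uses $\mm{A}{\ell}=\mm{B}{\ell}\cdot\mm{C}{\ell+s}$: the $C$-part is handled by inverting the unitriangular matrix $\{C\}_m$, whose inverse has entries $\gamma^j$ summing to $\prod_i(1-|\gamma_i|)^{-1}$ (your geometric series, but with no integer rounding involved); the $B$-part is handled by a greedy, one-coordinate-at-a-time adjustment using only the constant term $b_0$, which makes each successive component integral with a cube of side $\delta=1/|b_0|$ --- this is exactly where the integrality (your $\mathbf k$) is absorbed, coordinate by coordinate, with no global compatibility issue. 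Multiplying the two estimates gives $\bigl(|a_d|\prod_i\max\{|\alpha_i|,1-|\alpha_i|\}\bigr)^{-1}\le 1/M(A(x/2))$, and the $2^d/M(A(2x))$ branch follows by the coordinate-reversal symmetry. To repair your write-up you would need either to import that factor-and-push-forward mechanism, or to genuinely develop the lattice structure of $\mm{A}{m-d}(\Z^m)$ you allude to and then redo the error analysis so that the constant comes out as stated; as it stands the central estimate is unproved.
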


In other terms, if $\epsilon$ is as requested, then for arbitrary real numbers
$x_1,\dots,x_m$ it is possible to find a real number $t$ and integers $p_1,\dots,p_m$ such that
\[
  x_i \leq t\theta_i-p_i \leq x_i+\epsilon,\qquad\text{for }1\leq i \leq m.
\]

The $(\theta_i)$ can clearly be taken to be the powers $(\alpha^{i-1})$ of a real
algebraic number $\alpha$, and $A(x)$ the minimal integral polynomial of $\alpha$. But for
each primitive integral polynomial $A(x)$ the allowed $(\theta_i)$ are any
`sufficiently generic' recurrence sequence determined by $A(x)$, provided that $\theta_1,\dots,\theta_d$
are linearly independent over $\Q$.

We remark for convenience that
\[
  \min\left\{\frac{1}{M(A(x/2))}, \frac{2^d}{M(A(2x))}\right\} \leq \frac{2^{[d/2]}}{M(A(x))},
\] 
obtaining an estimate in terms of the Mahler measure of $A(x)$, this inequality will be
  also proved later.

Fixing the recurrence sequence $\theta_1,\theta_2,\dots$ and varying $m$, the best
$\epsilon_m$ such that $S_{\theta,m}$ is $\epsilon_m$-dense is initially equal to $0$ by
  Kronecker's theorem, because $S_{\theta,m}$ is dense for $m\leq{}d$, and then increases with $m$,
being $S_{\theta,m}$ a projection of $S_{\theta,m'}$ for $m < m'$. Theorem \ref{teo1}
gives an upper bound for the sequence $\epsilon_m$, a lower bound for the limit shall
be given in Theorem \ref{teo2} below.

Some computational evidence actually makes us propose the following
\begin{conjecture}
In Theorem \ref{teo1} we have $\epsilon$-density for each $\epsilon$ at least $1/M(A(x))$.
\end{conjecture}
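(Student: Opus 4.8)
The final statement is a \emph{conjecture}, so what follows is not a proof plan for a settled theorem but a description of the line of attack I would pursue, together with an honest account of where it is expected to break down.

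\textbf{Overall strategy.} The two proved bounds in Theorem~\ref{teo1} come from splitting $A(x)$ according to the archimedean size of its roots: the factor $2^d/M(A(2x))$ is driven by roots outside (or on) the unit circle, and $1/M(A(x/2))$ by roots inside it, with the dyadic rescaling $x\mapsto 2x$ costing a factor of $2$ per root on the wrong side. The conjecture asserts that this factor of $2$ is spurious and that the true obstruction is governed by $M(A(x))=|a_d|\prod\max\{1,|\alpha_i|\}$ alone, with no dependence on how the degree splits between small and large roots. The plan is therefore to replace the crude dyadic rescaling by a genuinely multidimensional covering argument: instead of covering $\T^m$ by translates of a single cube $C_\epsilon$ obtained by scaling one coordinate direction, one should exploit that $S_{\theta,m}$ is a one-parameter subgroup whose closure is a subtorus $H\subseteq\T^m$, and that $\T^m/H$ is controlled by the $\Z$-module of integral linear recurrences of length $m$ determined by $A(x)$ --- precisely the structure result advertised in the abstract and (in the full paper) used for the lower bound Theorem~\ref{teo2}. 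Concretely, the distance from a generic point of $\T^m$ to $H$ should be expressible, via duality, as a sup over integral recurrence vectors $\mathbf{c}=(c_1,\dots,c_m)$ with $\sum c_i\theta_i\in\Z$ of a quantity like $\|\,\cdot\,\|/\|\mathbf{c}\|_1$, and the claim $\epsilon\geq 1/M(A)$ would follow if one could show that the relevant optimally-short recurrence relations have $\ell^1$-norm comparable to $M(A)$ rather than to $2^{[d/2]}M(A)$.

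\textbf{Key steps, in order.} First, make the reduction to a problem about the lattice of integral recurrences explicit: show that $S_{\theta,m}$ fails to be $\epsilon$-dense iff there is an integer vector $\mathbf{c}$ annihilating $(\theta_i)$ modulo $\Z$ — i.e.\ a length-$m$ integral recurrence for $A(x)$ — with a small ``defect'', and quantify the best achievable $\epsilon$ as an extremal problem over this module. Second, invoke the structure theorem for the $\Z$-module of integral recurrences of fixed length determined by the non-monic $A(x)$ (stated in the abstract; available to me by assumption as it appears earlier in the paper), to get an explicit generating set; the non-monic case is exactly where $M(A)$, rather than just the height, enters, because $a_d\neq\pm1$ forces a $p$-adic/Newton-polygon analysis of which ``tail'' recurrences are integral. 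Third, using the Newton polygon decomposition of $A(x)$ at each bad prime and the archimedean factorization $M(A)=|a_d|\prod\max\{1,|\alpha_i|\}$, build an explicit short recurrence vector (or dually, an explicit family of translate-cubes) whose parameters match $1/M(A)$ with no loss; this is where the dyadic rescaling of the current proof would be replaced by choosing the covering ellipsoid/box adapted to the actual geometry of the roots rather than to a uniform factor $2$. Fourth, a compactness/limiting argument in $m$ to pass from each finite $m$ to the uniform statement, using that $\epsilon_m$ is monotone and (by Theorem~\ref{teo1}) bounded.

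\textbf{The main obstacle.} The crux is Step~3: in the proved theorem the factor $2^{[d/2]}$ appears because one trades a root of modulus slightly less than $1$ for a ``safely small'' root by scaling, and symmetrically for large roots, and there seems to be no way to avoid this with a single global rescaling. Removing it requires showing that the contributions of small-modulus and large-modulus roots to the covering radius \emph{multiply rather than add} in the right basis — essentially a statement that a certain Toeplitz-type determinant or a product of local contributions collapses to $M(A)$. I expect this to be genuinely hard: it is the same phenomenon that makes the lower bound in Theorem~\ref{teo2} only give $1/M(A)$ asymptotically, and closing the gap would mean the upper and lower bounds meet, which is exactly the content of the conjecture. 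A plausible partial result along the way — and a natural checkpoint — is to prove the conjecture in the two extreme cases where all roots lie strictly inside, respectively on or outside, the unit circle, where one of the two terms in the $\min$ already equals $1/M(A)$ or $2^d/M(A)$ and only a factor $2^d$ (not $2^{[d/2]}$) must be removed; handling the genuinely mixed case is where the real difficulty, and the need for the fine recurrence-module structure, resides.
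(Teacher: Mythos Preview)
You have correctly identified that this statement is a \emph{conjecture}: the paper does not prove it, so there is no ``paper's own proof'' to compare against. The paper offers only computational evidence and a heuristic link to the topological entropy $M(A)$ of the toral automorphism with companion matrix of $A(x)$; your proposal is therefore to be read as a research programme rather than as something that can be checked against an existing argument.

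One concrete correction to your analysis is worth making. The ``extreme cases'' you propose as a first checkpoint are in fact already settled by Theorem~\ref{teo1} with no residual factor at all. If every root satisfies $|\alpha_i|\geq 1$ then
\[
  M(A(x/2)) = |a_d|\prod_{i=1}^d \max\{\tfrac12,|\alpha_i|\} = |a_d|\prod_{i=1}^d |\alpha_i| = M(A),
\]
so the first term in the minimum is exactly $1/M(A)$. Symmetrically, if every root satisfies $|\alpha_i|\leq 1$ then $2^d/M(A(2x)) = 1/|a_d| = 1/M(A)$. In neither case is there a factor $2^d$ left to remove; the conjecture is already a theorem there. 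The loss in Theorem~\ref{teo1} arises precisely from roots with $\tfrac12<|\alpha_i|<1$ (or dually $1<|\alpha_i|<2$), which contribute $|\alpha_i|$ or $1-|\alpha_i|$ to the bound in the proof rather than the desired $\max\{1,|\alpha_i|\}=1$. So the genuinely open case is exactly the mixed one you flag as hardest, and there is no intermediate checkpoint of the kind you describe. Beyond this, your outline --- recast the covering question via the dual lattice $\Lambda_m$ and try to exploit Theorem~\ref{teo3} to sharpen the estimate --- is a plausible direction, but as you yourself note, Step~3 is where the entire difficulty lies and nothing in the paper suggests how to carry it out.
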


The conjecture is supported by the apparent connection of the above problem with algebraic
dynamics (see \cite{everest1999heights}): as it is well known, the toral automorphism
determined by the companion matrix of the polynomial $A(x)$ has topological entropy equal to
$M(A(x))$, a quantity controlling the growth of orbits (which are recurrence sequences connected to
$A(x)$). The $\epsilon$-density considered here is rather about how well the orbits in the toral
dynamical system can approximate a generic sequence modulo $1$, but there may still be some
connection between the two questions.

This conjectured optimal density coefficient is not very far from what is obtained in Theorem
\ref{teo1}. Such a result would be optimal because of the following lower bound for the
$\epsilon$-density, for large $m$:

\begin{teo} \label{teo2}
  Let $\theta_1,\theta_2,\dots$ and $A(x)$ be as in Theorem \ref{teo1}. Then for each
  $\epsilon<1/M(A(x))$ the set $S_{\theta,m}$ is not $\epsilon$-dense for sufficiently large $m$.
\end{teo}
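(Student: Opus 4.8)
The plan is to show that $\epsilon$-density of $S_{\theta,m}$ forces a lower bound on the ``size'' of certain integer linear recurrences associated to $A(x)$, and then to contradict this using asymptotics of Toeplitz determinants. First I would set up the duality: $S_{\theta,m} = \{\pi_m(t\theta_1,\dots,t\theta_m)\}$ fails to be $\epsilon$-dense precisely when there is a point of $\T^m$ at $d_\infty$-distance $>\epsilon/2$ from the one-dimensional subgroup (or its closure) generated by $(\theta_1,\dots,\theta_m)$. By a standard covering/transference argument, $\epsilon$-density of a subgroup-like set is controlled by the covolume of the dual lattice; concretely, the set $S_{\theta,m}$ is $\epsilon$-dense only if every nonzero integer vector $\mathbf{c}=(c_1,\dots,c_m)\in\Z^m$ that annihilates the sequence in the relevant sense — i.e.\ $\sum_i c_i\theta_i=0$ — has $\ell^1$-norm (or an appropriate norm) at least $1/\epsilon$. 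Here is where the hypothesis that $\theta_1,\dots,\theta_d$ are linearly independent over $\Q$ and the $\theta_i$ satisfy the recurrence determined by $A(x)$ enters: the $\Z$-module of such relations $\mathbf{c}$ is exactly the $\Z$-module of integral linear recurrences of length $m$ determined by $A(x)$, the object analyzed in the ``byproduct'' result mentioned in the abstract.

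Next I would invoke that structural result to describe the module $R_m$ of integer vectors $\mathbf{c}\in\Z^m$ with $\sum_{i} c_i \theta_i = 0$ for every recurrence sequence determined by $A(x)$ (equivalently, the orthogonal complement, inside $\Z^m$, of all shifted copies of the coefficient vector of $A$). Since $A(x)=a_dx^d+\dots+a_0$ is non-monic, $R_m$ is \emph{not} simply spanned by the shifts of $(a_0,\dots,a_d)$; it is a rank-$(m-d)$ lattice whose shortest vectors one must estimate. The key quantitative point is that any $\mathbf{c}\in R_m$ corresponds to a polynomial identity $C(x)\cdot(\text{something}) \equiv 0$, and the minimal $\ell^1$-norm of a nonzero $\mathbf{c}\in R_m$ is governed by how small the coefficients of a multiple $B(x)A(x)$ (with $B$ of degree $m-d$) can be made relative to its length — which is a Toeplitz-determinant problem. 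Specifically, writing the map ``$\mathbf{c}\mapsto$ coefficients of $C(x)/A(x)$'' in coordinates produces a Toeplitz matrix whose entries are the Taylor coefficients of $1/A(x)$, and whose determinant — by the Szeg\H{o}–type / Schur–Szeg\H{o} asymptotics for Toeplitz determinants with a symbol having zeros and poles — grows like $M(A(x))^{m}$ up to subexponential factors. Dividing by the dimension count yields that the \emph{normalized} covolume, hence the minimal norm of a relation vector, grows at the exponential rate $M(A(x))$ per coordinate in a way that forces, for large $m$, the existence of a relation $\mathbf{c}$ with $\|\mathbf{c}\|_1 \leq M(A(x))^{1+o(1)} < 1/\epsilon$ once $\epsilon < 1/M(A(x))$.

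Finally I would assemble: fix $\epsilon < 1/M(A(x))$, choose $\delta>0$ with $\epsilon < 1/(M(A(x))+\delta)$, use the Toeplitz asymptotics to produce for all large $m$ a nonzero $\mathbf{c}\in R_m$ with $\|\mathbf{c}\|_1 < 1/\epsilon$, and then observe that this single short relation pins the whole torus orbit $S_{\theta,m}$ into the slab $\{x : |\langle \mathbf{c},x\rangle|_{\R/\Z} \text{ small}\}$, whose complement contains a point at $d_\infty$-distance $> \epsilon/2$ from $S_{\theta,m}$; hence $S_{\theta,m}$ is not $\epsilon$-dense. The main obstacle I anticipate is the Toeplitz-determinant estimate: one needs an asymptotic for $\det T_n(\phi)$ where the symbol $\phi$ is essentially $A(e^{i\vartheta})$ (or $1/A$), which is neither bounded away from $0$ nor analytic across the circle when $A$ has roots on or near $|x|=1$, so the classical strong Szeg\H{o} theorem does not apply directly; I would instead use the generalized formula of Widom / Böttcher–Silbermann expressing the leading term in terms of $\prod\max(1,|\alpha_i|) = M(A)/|a_d|$ (together with $|a_d|$ from the leading Toeplitz entry), with the error being subexponential in $n$, and carefully track that the polynomial/non-monic normalization contributes exactly the factors needed to get $M(A(x))$ and not $M(A(x))/|a_d|$ in the final bound.
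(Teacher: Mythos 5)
Your argument hinges on producing, for large $m$, a single integral relation $\mathbf{c}$ of $\theta_1,\dots,\theta_m$ with $\|\mathbf{c}\|_1<1/\epsilon$, and this is where the proposal breaks down. First, you misidentify the module involved: under the hypotheses (primitive $A$, $\theta_1,\dots,\theta_d$ linearly independent over $\Q$), the module $R_m$ of integral relations is \emph{exactly} the row space of $\mm{A}{m-d}$, i.e.\ it is spanned by the shifts of $(a_0,\dots,a_d)$ — this follows from division by $A$ plus Gauss's lemma and has nothing to do with monicity. The ``byproduct'' structure theorem of the paper (Theorem \ref{teo3}) concerns a different object, the rank-$d$ module $\Lambda_m$ of vectors \emph{annihilated} by $\mm{A}{m-d}$ (the recurrences themselves), which is where non-monicity matters; it is not the relation module $R_m$, and it does not describe short relation vectors.

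Second, and fatally, short relations of the size you need do not exist. Every nonzero $\mathbf{c}\in R_m$ is the coefficient vector of a nonzero integer multiple $B(x)A(x)$, and such a polynomial is never a monomial, so it has at least two nonzero integer coefficients and $\|\mathbf{c}\|_1\geq\max\{2,\,M(A)\}$. Hence for any $A$ with $M(A)<2$ (cyclotomic polynomials, Lehmer-type polynomials — precisely the regime the paper cares about) there is no relation of $\ell^1$-norm $M(A)^{1+o(1)}$, and the single-relation obstruction can never rule out $\epsilon$-density beyond $\epsilon<1/2$, let alone for all $\epsilon<1/M(A)$. Covolume/Toeplitz asymptotics cannot rescue this: they control the determinant of the relation lattice (hence, via Minkowski, give a nonzero vector of norm roughly $\sqrt{m-d}\,M(A)$, with a dimension factor), not vectors of norm tending to $M(A)$. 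The paper's proof avoids first minima altogether and runs a volume argument: if $S_{\theta,m}$ were $\epsilon$-dense, the zonotope $\Pi$ spanned by $\epsilon e_1,\dots,\epsilon e_m$ together with a basis of the rank-$d$ lattice $\Lambda_m$ would surject onto $\T^m$, forcing $\mathrm{Vol}(\Pi)\geq 1$; Lemma \ref{lem1} (Trench's formula and Bump--Diaconis asymptotics for banded Toeplitz minors) and Lemma \ref{lem2} (a good basis of $\Lambda_m$, using Corollary \ref{cor1}, i.e.\ the index $|a_d|^{m-d}$ coming from Theorem \ref{teo3}) bound $\mathrm{Vol}(\Pi)\leq\sqrt{K}\binom{m+d}{m}\epsilon^{m-d}M^m$ for any $M>M(A)$, which tends to $0$ when $\epsilon<1/M<1/M(A)$. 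Volumes multiply across the $m$ coordinates and therefore detect the sharp per-coordinate threshold $M(A)$; a single relation cannot. If you want a route closer in spirit to yours, you would have to use the \emph{whole} rank-$(m-d)$ relation lattice (e.g.\ the volume of the zonotope $\mm{A}{m-d}\cdot I_\epsilon$ in $\R^{m-d}$), not its shortest vector.
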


Note that if the existence of some $S_{\theta,m}$ which are not $\epsilon$-dense
  for some $\epsilon<1$ could be ruled out, not considering the instances where $A(x)$
is a cyclotomic polynomial, then a positive solution of Lehmer problem
\cite[Chap.~1]{everest1999heights} would follow. This is because the existence of a sequence
of polynomials with Mahler measure approaching $1$ from above would be automatically ruled out.

During the proof of this Theorem a result about the structure of the module of linear recurrences of
fixed length is obtained. In particular, fixing a prime $p$, denoting as
$\Lambda_m^{(p)} \subset \Z_p^m$ the module of linear recurrences of length $m$ in $\Z_p$ determined
by $A(x)$, and keeping the notation of \S\ref{newtsec} for the Newton polygon of $A(x)$, we
have that

\begin{teo}
\label{teo3}
For each prime $p$ there exists a unique basis of $\Lambda_m^{(p)}$ such that the $d\times m$ matrix
$M = (M_{i,j})$ having the basis vectors as rows satisfies
\begin{enumerate}
 \item the left $d\times d$ submatrix $(M_{i,j})_{\substack{1\leq i\leq d\\1\leq j\leq d}}$
(resp. the submatrix $(M_{i,j})_{\substack{1\leq i\leq d\\m-d+1\leq j\leq m}}$) is a
block upper (resp. lower) triangular matrix, with blocks $B_1,\dots,B_r$ (resp. $C_1,\dots,C_r$) on
the diagonal; the number of blocks $r$ is equal to the number of edges of the Newton polygon of
$A(x)$, and $B_k$ and $C_k$ are square matrices of size equal to the
lenght $\ell_k$ of the $k$-th side of the polygon for each $1\leq k \leq r$;
\item taking $1\leq s\leq r$ to be the smallest integer such that $\sigma_s \geq 0$
  (or, alternatively, such that $\sigma_s > 0$) then $B_1,\dots,B_{s-1},C_s,\dots,C_r$ are identity
  matrices, for $1\leq k \leq s-1$ the matrix $C_k$ has determinant with valuation
  $-\sigma_k\ell_k(m-d)$, and for $s\leq k \leq r$ the matrix $B_k$ has determinant with valuation
  $\sigma_k\ell_k(m-d)$;
\item moving right from the element $M_{j,j}=1$ contained in the diagonal of the $k$-th
  identity block $B_k$ for $1 \leq k \leq s-1$ (resp. moving left from an $M_{j+m-d,j}=1$
  contained in the diagonal of the identity block $C_k$, for $s \leq k \leq r$) by $t$ steps we find
  elements with $p$-adic valuation at least $-t\sigma_k$ (resp. at least $t\sigma_k$).
\end{enumerate}
\end{teo}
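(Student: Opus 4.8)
The plan is to pass to the splitting field $K$ of $A(x)$ over $\Q_p$ and exploit the slope factorization. First, $\Lambda_m^{(p)}$ is the kernel of the $\Z_p$-linear map $\Z_p^m\to\Z_p^{m-d}$ given by the band Toeplitz matrix with rows $(0,\dots,0,a_0,\dots,a_d,0,\dots,0)$; since $a_0a_d\neq 0$, the submatrix on the last $m-d$ columns is triangular with diagonal $a_d$, so the map is surjective and $\Lambda_m^{(p)}$ is free of rank $d$. As $\Oh_K$ is a free, hence flat, $\Z_p$-module, $\Lambda_m^{(\Oh_K)}=\Lambda_m^{(p)}\otimes_{\Z_p}\Oh_K$ is the kernel of the same matrix over $\Oh_K$, so it suffices to build the basis over $\Oh_K$ and observe at the end that it is defined over $\Z_p$. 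By the Newton polygon theory write $A(x)=a_d\prod_{k=1}^r\widetilde A_k(x)$ with $\widetilde A_k\in\Q_p[x]$ monic of degree $\ell_k$ and all roots of valuation $-\sigma_k$: for $k<s$ the factor $\widetilde A_k$ is a distinguished polynomial (in $\Z_p[x]$, congruent to $x^{\ell_k}$ modulo $\mathfrak m$), for $k>s$ a suitably normalized reciprocal of $\widetilde A_k$ is distinguished, and for $k=s$ with $\sigma_s=0$ both hold. Correspondingly, over $K$ the solution space $V=\Lambda_m^{(p)}\otimes K$ splits as $\bigoplus_k V_k$, with $V_k$ spanned by the geometric progressions $(\alpha^{n-1})_{n=1}^m$ attached to the roots $\alpha$ of $\widetilde A_k$ (and their confluent analogues if $\widetilde A_k$ has repeated roots); each $V_k$ descends to $\Q_p$ since $\widetilde A_k$ does.

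Next I reformulate the theorem. For $1\le k\le r$ let $W^{(k)}$ be the submodule of $\Lambda_m^{(p)}$ of recurrences vanishing on the columns $1,\dots,w_{k-1}$ and on the columns $m-d+w_k+1,\dots,m$. If the rows of $M$ are grouped so that the $k$-th group consists of rows $w_{k-1}+1,\dots,w_k$, then block upper triangularity of the left $d\times d$ block and block lower triangularity of the right one say exactly that the $k$-th group of rows lies in $W^{(k)}$; hence property (1), together with the fact that $M$ is a basis matrix, forces $\Lambda_m^{(p)}=\bigoplus_k W^{(k)}$ with each $W^{(k)}$ free of rank $\ell_k$, and conversely such a decomposition produces a matrix satisfying (1). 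To establish it I would write a recurrence in the root basis as $\theta=\sum_\alpha c_\alpha\,(\alpha^{n-1})_n$ and argue that the $d-\ell_k$ linear conditions defining $W^{(k)}\otimes K$ are independent and cut out a space of dimension exactly $\ell_k$: this is a dominant-balance argument, in which the conditions on the first $w_{k-1}$ coordinates neutralize the contributions of the roots of larger valuation (slopes $\sigma_1,\dots,\sigma_{k-1}$) and those on the last $d-w_k$ coordinates neutralize the roots of smaller valuation (slopes $\sigma_{k+1},\dots,\sigma_r$), because for roots in different slope classes the valuations of $\alpha^{n}$ separate linearly in $n$ along the Newton polygon. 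One then checks that $\bigoplus_k W^{(k)}$, which has the full rank $d$, is saturated in $\Z_p^m$, hence equals $\Lambda_m^{(p)}$.

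It remains to analyze a single $W^{(k)}$ and produce its normalized basis. Using the reversal $A(x)\mapsto x^dA(1/x)$, which reverses the order of the coordinates, swaps the left and right $d\times d$ blocks, turns upper triangular into lower triangular, and negates every slope, I may reduce to the case $\sigma_k\le 0$, so that $B_k$ is to be the identity. The evaluation $\theta\mapsto(\theta_{w_{k-1}+1},\dots,\theta_{w_k})$ is then an isomorphism $W^{(k)}\xrightarrow{\sim}\Z_p^{\ell_k}$: a recurrence with prescribed values there and zeros on the required outer columns is obtained by propagating forward (and, using the vanishing of the final coordinates and the distinguished right-hand factor, backward) without leaving $\Z_p$, which simultaneously gives the identity block $B_k$ and the integrality of the basis vectors. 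The growth bound in (3) is the standard estimate that propagating a recurrence whose characteristic slopes are all $\ge\sigma_k$ raises the valuation by at least $-\sigma_k$ per step, coming from the Newton polygon of the distinguished factor of slope $\sigma_k$. Finally, expressing the $\ell_k$ basis vectors in the root basis of $V_k$ as rows of an inverse Vandermonde matrix in the roots $\alpha$ of $\widetilde A_k$, the opposite block $C_k$ equals that inverse Vandermonde times a shifted Vandermonde, whose determinant is $\bigl(\prod_\alpha\alpha\bigr)^{m-d}$; its valuation is $(m-d)\cdot\ell_k\cdot(-\sigma_k)=-\sigma_k\ell_k(m-d)$, turning into $\sigma_k\ell_k(m-d)$ for the $B_k$ block after undoing the reversal when $\sigma_k\ge 0$. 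Uniqueness then follows: the $k$-th row group must be a basis of $W^{(k)}$, and the normalization of $B_k$ (or $C_k$) to the identity pins it down via the isomorphism above, property (3) being automatic.

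The main obstacle is the dominant-balance step: showing that the spread-out vanishing conditions defining $W^{(k)}$ are linearly independent and isolate exactly the slope-$\sigma_k$ part — equivalently, that the relevant $d\times d$ evaluation matrix on the chosen columns is invertible with the expected reduction — and that the resulting $\bigoplus_k W^{(k)}$ exhausts $\Lambda_m^{(p)}$ rather than a proper finite-index submodule. The Newton polygon valuation estimates must be made uniform across all slope classes, and the bookkeeping is heaviest when $A(x)$ has repeated roots, where the plain geometric progressions must be replaced by their confluent versions.
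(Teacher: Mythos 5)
Your skeleton (windows attached to the vertices $w_0,\dots,w_r$ of the Newton polygon, identity blocks coming from unique solvability of an interpolation problem, a reversal symmetry exchanging the two cases, uniqueness at the end) matches the shape of the paper's construction, but the actual content of the theorem is concentrated in the step you yourself flag as ``the main obstacle,'' and your sketch of it does not go through. The key fact is that evaluation at the split set of columns $1,\dots,w,\ m-d+w+1,\dots,m$ is an isomorphism on the solution space whenever $w$ is a vertex of the Newton polygon; this is what gives $\dim W^{(k)}\otimes K=\ell_k$, the directness of $\bigoplus_k W^{(k)}$, and the invertibility of the evaluation maps you use for the identity blocks. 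Your dominant-balance argument in the root basis cannot establish this as stated: whenever a slope class has $\ell_k>1$ (and a fortiori with repeated roots) the minimal-valuation terms in the permutation expansion of the relevant generalized Vandermonde determinant tie, and the tied contribution is a Schur-type subdeterminant in the equal-valuation roots, which vanishes identically for repeated roots and can carry unexpected extra valuation when roots are congruent modulo $\mathfrak{m}$; so ``valuations separate linearly in $n$'' does not isolate the slope-$\sigma_k$ part. The paper's proof exists precisely to get around this: it writes $\det N_\xi$ as a Schur function $s_\lambda$ with $\lambda$ a rectangle, uses Jacobi--Trudi to convert it into the banded Toeplitz determinant $\det(a_{w+i-j})_{1\leq i,j\leq m-d}$ in the \emph{coefficients}, and then uses convexity of the Newton polygon at the vertex $w$ to make the $\rho$-conjugated matrix unitriangular modulo $\mathfrak{m}$. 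Some argument of this strength is indispensable; without it nothing downstream (rank of $W^{(k)}$, exhaustion, block determinants) is proved.

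Two further steps are asserted but would fail as written. First, ``propagating forward \dots\ without leaving $\Z_p$'' is not automatic: forward propagation divides by $a_d$, which is not a unit exactly in the interesting cases, and an element of $W^{(k)}$ is \emph{not} a solution of the pure-slope factor $\widetilde A_k$ --- it mixes all slope classes (in the paper's example the first row ends in three zeros, impossible for a pure geometric solution). Hence the integrality of the rows and the valuation growth in item (3) need a genuine argument; the paper uses the auxiliary polynomial $B(x)=\sum_i \rho^{w-i}a_i a_w^{-1}x^i$ and a reduction-mod-$\mathfrak{m}$ recurrence argument. For the same reason your identification of $C_k$ with an inverse Vandermonde times a shifted Vandermonde in the roots of $\widetilde A_k$ is not an identity but at best a leading-order statement, and bounding the error is again the missing quantitative estimate (the paper instead shows that, after scaling, $C_k$ is congruent mod $\mathfrak{m}$ to the $(m-d)$-th power of the companion matrix of the reduced middle factor, which yields the valuation $-\sigma_k\ell_k(m-d)$ exactly). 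Second, exhaustion of $\Lambda_m^{(p)}$: a sum of saturated submodules need not be saturated (already in $\Z_p^2$, $\Z_p(1,1)+\Z_p(1,1+p)$ has index $p$), so ``one then checks that $\bigoplus_k W^{(k)}$ is saturated'' is not a check but the remaining claim; the paper settles it by exhibiting a $d\times d$ minor of $M$ on the columns $1,\dots,w,\ m-d+w+1,\dots,m$ with unit determinant, which in turn uses the identity blocks and the integrality already established. So the proposal is an outline whose three load-bearing steps (nonsingularity of the split-column evaluation, $p$-adic integrality with the stated valuations, and generation of the full module) remain unproven.
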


In other words, the matrix $M$ can be taken with the following form:
\[
\newcommand*{\ddd}[1]{ \makebox[0.1in][c]{$\ddots$} }
\newcommand*{\dd}[1]{ \makebox[0.11in][c]{$\dots$} }
\newcommand*{\zero}[1]{ \makebox[0.11in][c]{$0$} }
\newcommand*{\one}[1]{ \multicolumn{1}{|c|}{\makebox[0.1in][c]{$1$}} }
\newcommand*{\CC}[1]{ \multicolumn{1}{|c|}{\makebox[0.1in][c]{$C_{#1}$}} }
\newcommand*{\CCs}[1]{ \multicolumn{1}{|c|}{\makebox[0.1in][c]{$\scriptstyle{C_{#1}}$}} }
\newcommand*{\BB}[1]{ \multicolumn{1}{|c|}{\makebox[0.1in][c]{$B_{#1}$}} }
\newcommand*{\BBs}[1]{ \multicolumn{1}{|c|}{\makebox[0.1in][c]{$\scriptstyle{B_{#1}}$}} }
\newcommand\TT{\rule{0pt}{2.3ex}}
\left(
\begin{array}{cccccccc|cc|cccccccc}
 \cline{1-1}\cline{11-11} \one & & &  \dd &  &  &  &  & &   & \dd &   &    &
	\CC{0} & & & & & & & \TT \\
 \cline{1-2}\cline{11-12} & \one &  &  &  \dd &  &  &  &  &   &  & \dd &   &
	& \CC{1} & & & & & \zero & \TT \\
 \cline{2-2}\cline{12-12} &  & \ddd & &  &  &    &   &   &   &   &
	& & \ddd & \TT \\
 \cline{4-4}\cline{14-14} &  &  & \one &  & & \dd  & &     & &  \dd  &  &
	&  & \dd & & & \CCs{s-1} &  \TT \\
 \cline{4-5}\cline{14-15} &  &  & & \BB{s} &  & \dd  &  &   &  &  \dd  &  &  &
	&  \dd & & & \one &  \TT \\
 \cline{5-5}\cline{15-15} & &  &  &  &  \ddd &  &  &  &         &  &
	& & & & & \ddd &  \TT \\
 \cline{7-7}\cline{17-17} & \zero & & & &  &  & \BBs{r-1} &  &  \dd &  &  &
	& & & & \dd & & & \one &  \TT \\
 \cline{7-8}\cline{17-18} & &  &  &  &  &  & \BB{r}  &  &   \dd &  &
	& & & & & \dd & &  & \one &  \TT \\
 \cline{8-8}\cline{18-18} 
\end{array}\right)
\]
and the behaviour of the $p$-adic valuation on the rows is controlled by the slope of the segments
of the Newton polygon of $A(x)$.

For instance, if $A(x) = 9x^4 - 3x^3 -9x^2 - 2x + 3$ and we take $p=3$, then for recurrence length
$m=10$ the matrix $M$ is
\[
\newcommand*{\BB}[1]{ \multicolumn{1}{|c|}{#1} }
\newcommand*{\LL}[1]{ \multicolumn{1}{|c}{#1} }
\newcommand*{\RR}[1]{ \multicolumn{1}{c|}{#1} }
\newcommand\TT{\rule{0pt}{2.6ex}}
\newcommand\SP{\rule[-1ex]{0pt}{0pt}}
\left(
\begin{array}{cccccccccc}
\cline{1-1}\cline{7-7}
  \BB{1} &\frac{480}{887}& \frac{4203}{16853}& \frac{3861}{33706}& \frac{2511}{33706}& 
\frac{243}{16853}&  \BB{\frac{729}{33706}} &  0 &  0 &  0 \SP\TT \\
\cline{1-3}\cline{7-9} 0& \LL{\frac{-7722}{887}} & \RR{\frac{-5049}{16853}}
&\frac{-3339}{33706}&\frac{-76419}{ 33706}&\frac{33378}{16853} & \frac{-51543}{33706} & \LL{1} & 
\RR{0} &  0 \SP\TT \\
 0 & \LL{\frac{729}{887}}&\RR{\frac{-44658}{16853}}&\frac{42822}{16853} & \frac{-27306}{16853}
  &\frac{19179}{16853}& \frac{3489}{16853} &  \LL{0}  &  \RR{1} & 0 \SP\TT \\
\cline{2-4}\cline{8-10}  0 \SP\TT &  0 &   0 & \BB{\frac{729}{1078}}&   \frac{243}{1078}&   
\frac{405}{539}&   \frac{675}{1078}&   \frac{423}{539}&   48/49 & \BB{1} \\
\cline{4-4}\cline{10-10}
\end{array}
\right).
\]
All denominators are prime with $3$, the $3$-adic valuations are
\[
\left(
\begin{array}{cccccccccc}
  0&  1&  2&  3&  4&  5&  6&  \infty&  \infty&  \infty \\
  \infty&  3&  3&  2&  2&  1&  3&  0&  \infty&  \infty \\
  \infty&  6&  3&  3&  2&  2&  1&  \infty&  0&  \infty \\
  \infty&  \infty&  \infty&  6&  5&  4&  3&  2&  1&  0 
\end{array}
\right),
\]
and these valuations increase going right in the first row, slowly increase going left in
the second and third rows, and increase going left on the last row.

\subsection{Acknowledgements} We wish to thank Roberto Dvornicich for offering kind support and
advice during our work, and for providing us the motivation to improve the second part of the
paper. We would also like to thank Albrecht B\"ottcher and Samuele Mongodi for their help in
looking for references, and Lisa Beck for reading the manuscript.

\section{Preliminaries}
For a complex polynomial $A(x) = a_d x^d + \dots + a_1 x + a_0$
with $a_d,a_0 \neq 0$ let us define, for each $\ell \geq 1$, the
rectangular $\ell \times (\ell+d)$ matrix
\[ \mm{A}{\ell} \defeq
\begin{pmatrix}
  a_0 & a_1 & \dots & a_d & \\
      & a_0 & a_1   & \dots & a_d  \\
      &      & a_0 & a_1  & \dots& a_d  \\
      &      &     & \ddots &    &  \ddots  & \ddots \\
      &      &     &        & a_0 & a_1 & \dots & a_d  \\
\end{pmatrix}.
\]
$\mm{A}{\ell}$ is also the multiplication matrix for sections of power series by the polynomial
$A(x)$, in the sense that if
\[
F(x) = \sum_{i=0}^\infty f_i x^i, \qquad G(x) = \sum_{i=0}^\infty g_i x^i
\]
are complex power series such that $G(x) = A(x) \cdot F(x)$, then
\[
 \left( g_{d+\ell-1}, \dots, g_{d+1}, g_d \right)^\top =
\mm{A}{\ell} \cdot
  \left( f_{d+\ell-1}, \dots, f_{d+1}, f_d, \dots, f_0  \right)^\top,
\]
indeed the range of coefficients $0,1,\dots,d+\ell-1$ of $F(x)$ uniquely determines the
coefficients $d,d+1,\dots,d+\ell-1$ of $G(x)$.

Moreover, if $A(x)$ factors as
\[
  A(x) = B(x) \cdot C(x),
 \qquad B(x) = \sum_{i=0}^s b_i x^i,
 \qquad C(x) = \sum_{i=0}^t c_i x^i,
\]
then we have
\[
  \mm{A}{\ell} = \mm{B}{\ell} \cdot \mm{C}{\ell+s} = \mm{C}{\ell} \cdot \mm{B}{\ell+t}
\]
as it is easy to verify, observing that multiplying a power series by $A(x)$ and discarding the
lowest $d$ coefficients gives the same result as multiplying first by $C(x)$ and discarding the
lowest $t$ coefficients, and then by $B(x)$ and discarding the lowest $s$ coefficients (and
vice-versa).

For each polynomial $A(x)$ and positive integer $m$ we will also need the lower triangular $m \times
m$ matrix defined as
\[ \{A\}_m \defeq
\begin{pmatrix}
  a_d &    &      &    & \\
  a_{d-1} & a_d   &   &     &      \\
 \vdots  &  \vdots & \ddots &    &    &  \\
 a_0  & a_1 &  \dots  & a_d &    &     &      \\
      & a_0 & a_1   & \dots &  a_d   &  \\
      &     & \ddots &  \ddots &        & \ddots  &   \\
      &     &        &  a_0 &  a_1  &    & a_d  \\
\end{pmatrix}.
\]
Note that $\mm{A}{\ell}$ is embedded in $\{A\}_{\ell+d}$ as the last $\ell$ rows. Similarly to the
previous case, we have that if $A(x) = B(x) \cdot C(x)$, then $\{A\}_m = \{B\}_m \cdot \{C\}_m$. In
particular, if $C(x)$ splits in linear factors as
\[ C(x) = \prod_{i=1}^t\left(x-\gamma_i\right), \]
we have that
\[ \{C\}_m = \prod_{i=1}^t \{x-\gamma_i\}_m. \]

\section{Proof of Theorem \ref{teo1}}

We will first show the inequality implicitly stated in Theorem \ref{teo1}:
\begin{prop}
Let $A(x)$ be a polynomial, then
\[
  \min\left\{\frac{1}{M(A(x/2))}, \frac{2^d}{M(A(2x))}\right\} \leq \frac{2^{[d/2]}}{M(A(x))}.
\]
\end{prop}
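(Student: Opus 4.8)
The plan is to express everything in terms of the roots $\alpha_1,\dots,\alpha_d$ and the leading coefficient $a_d$ of $A(x)$, and then reduce the claim to an elementary estimate on a product over the roots. First I would record how the Mahler measure behaves under the substitutions $x\mapsto 2x$ and $x\mapsto x/2$: from $A(x)=a_d\prod_{i=1}^d(x-\alpha_i)$ one gets $A(2x)=2^da_d\prod_{i=1}^d(x-\alpha_i/2)$ and $A(x/2)=2^{-d}a_d\prod_{i=1}^d(x-2\alpha_i)$, hence
\[
  M(A(2x)) = 2^d|a_d|\prod_{i=1}^d\max\{1,|\alpha_i|/2\},\qquad
  M(A(x/2)) = 2^{-d}|a_d|\prod_{i=1}^d\max\{1,2|\alpha_i|\}.
\]
Writing $u_i=\max\{1,|\alpha_i|\}$, $v_i=\max\{1,2|\alpha_i|\}$, $w_i=\max\{1,|\alpha_i|/2\}$, so that $M(A(x))=|a_d|\prod_i u_i$, the inequality to be proved becomes, after multiplying through by $|a_d|\prod_i u_i$ and simplifying,
\[
  \min\left\{2^d\prod_{i=1}^d\frac{u_i}{v_i},\ \prod_{i=1}^d\frac{u_i}{w_i}\right\}\le 2^{[d/2]}.
\]

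Next I would isolate the only two facts needed about a single root, according to whether $|\alpha_i|\le 1$ or $|\alpha_i|>1$. Since $v_i\ge u_i$ always, $u_i/v_i\le 1$; and if $|\alpha_i|>1$ then $v_i=2|\alpha_i|=2u_i$, so $u_i/v_i=1/2$ exactly. Since $u_i=\max\{1,|\alpha_i|\}\le\max\{2,|\alpha_i|\}=2w_i$ always, $u_i/w_i\le 2$; and if $|\alpha_i|\le 1$ then $w_i=1=u_i$, so $u_i/w_i=1$ exactly. Let $k$ be the number of indices with $|\alpha_i|>1$. Then in the first product the $k$ roots outside the closed unit disk contribute exactly $1/2$ each and the remaining $d-k$ roots contribute at most $1$, so $2^d\prod_i(u_i/v_i)\le 2^d\cdot 2^{-k}=2^{d-k}$; in the second product the $d-k$ roots in the closed unit disk contribute exactly $1$ each and the others at most $2$, so $\prod_i(u_i/w_i)\le 2^k$.

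Finally I would close with the trivial observation $\min\{2^{d-k},2^k\}=2^{\min\{d-k,k\}}\le 2^{[d/2]}$, valid because $\min\{d-k,k\}$ is an integer not exceeding $d/2$. There is no genuine obstacle in this argument; the only places asking for a touch of care are the two uniform bounds $u_i/v_i\le 1$ and $u_i/w_i\le 2$, and the check that the extremal values $1/2$ and $1$ are attained precisely on the roots outside, respectively inside, the unit disk, which is what lets the two products be estimated against complementary subsets of the roots.
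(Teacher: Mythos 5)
Your proof is correct and follows essentially the same route as the paper's: both reduce the three Mahler measures to products over the roots and then observe, by pigeonhole on the number $k$ of roots outside the unit disk, that one of the two terms in the minimum is at most $2^{[d/2]}/M(A(x))$. Your write-up just makes the per-root ratio bounds and the final $\min\{2^{d-k},2^k\}\le 2^{[d/2]}$ step more explicit than the paper does.
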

\begin{proof}
We have
\begin{align*}
  M(A(x/2)) &= \frac{\left|a_d\right|}{2^d} \cdot \prod_{j=1}^d
\max\big\{1,|2\alpha_j|\big\}\\
  &= \left|a_d\right| \cdot \prod_{j=1}^d \max\left\{\frac{1}{2},|\alpha_j|\right\},
\end{align*}
while
\begin{align*}
  2^{-d}M(A(2x)) &= \left|a_d\right| \cdot \prod_{j=1}^d
\max\left\{1,\left|\frac{\alpha_j}{2}\right|\right\}\\
  &= \left|a_0\right| \cdot \prod_{j=1}^d
\max\left\{\left|\alpha_j^{-1}\right|,\frac{1}{2}\right\} = M(\tilde{A}(x/2)),
\end{align*}
where we called $\tilde{A}(x)$ the conjugate polynomial $\sum_{i=0}^d a_i x^{d-i}$.

We have $M(A(x/2)) \geq 2^{-[d/2]}M(A(x))$ if $A(x)$ has at least $d/2$ roots $\geq 1$ in absolute
value, while $2^{-d}M(A(2x)) \geq 2^{-[d/2]}M(A(x))$ if $A(x)$ has at least $d/2$ roots $\leq 1$ in
absolute value. Since at least one of these conditions must hold the proposition is proven.
\end{proof}

We now prove Theorem \ref{teo1}.

\begin{proof}[Proof of Theorem \ref{teo1}]
From Kronecker's Approximation Theorem \cite[pg. 53-54]{cassels1957introduction} the
set $S_{\theta,m}$ is dense in the closed subgroup of the torus of the elements satisfying
the same integral relations as the $\theta_1,\dots,\theta_m$.

Let $A(x) = \sum_{i=0}^d a_dx^d = a_d\prod_{i=1}^d(x-\alpha_i)$ be the primitive integral polynomial
defining the recurrence sequence $\theta_1,\theta_2,\dots$ . If $k_0,\dots,k_{m-1}$ is an integral
linear relation satisfied by the $\theta_i$ (that is $\sum_{i=1}^m k_{i-1}\theta_i = 0$), and
$K(x) = \sum_{i=0}^{m-1} k_i x^i$ is the corresponding polynomial, we can consider the remainder
$R(x)=\sum_{i\geq 0} r_ix^i$ of the division of $K(x)$ by $A(x)$. It is a polynomial of degree
$\leq{}d-1$ and its coefficients $r_0,r_1,\dots$ still define a linear relation of the
$\theta_i$. But $\theta_1,\dots,\theta_d$ are linearly independent over $\Q$, so $R(x)$ must be
zero.

Consequently $K(x)$ must be a multiple of $A(x)$ in $\Q[x]$, and thus in $\Z[x]$ by Gauss
lemma, being $A(x)$ primitive. This shows that for $\ell=m-d$ the rows of the matrix $\mm{A}{\ell}$
generate the $\Z$-module of the integral relations of $\theta_1,\dots,\theta_m$.

The closure of $S_{\theta,m}$ is thus the projection on $\T^m$ of the hyperplane
\[
  P = P(A) = \Big\{ v \in \R^m\ :\ \mm{A}{\ell} \cdot v = 0 \Big\},
\]
the kernel of the linear application $\mm{A}{\ell} : \R^m \rightarrow \R^\ell$.

Its inverse image under the projection $\pi_m$ is $P + \Z^m$, and since we just showed that the
sublattice of $\Z^m$ generated by the rows of the matrix $\mm{A}{\ell}$ is the saturated sublattice
of the integral linear relations of $\theta_1,\dots,\theta_m$, we have that $P+\Z^m$ is also
equal to
\[
  Q = Q(A) = \Big\{ v \in \R^m\ :\ \mm{A}{\ell} \cdot v \in \Z^\ell \Big\}.
\]
Indeed, the $\Z$-linear map $\mm{A}{\ell} : \Z^m \rightarrow \Z^\ell$ must be surjective
(this is true a fortiori being the above lattice saturated, but it can also be seen directly
considering the $\ell\times \ell$ minors modulo $p$ for each prime $p$ \cite[Lemma 2,
Chap. 1]{cassels1971ign}, which have gcd $1$ because $A(x)$ is primitive). Now, $P+\Z^m\subseteq{}Q$
clearly, and for each vector $w \in Q$ there exist a vector $z \in \Z^m$ such that
$\mm{A}{\ell}\cdot{}w=\mm{A}{\ell}\cdot{}z$, and consequently the difference $v = w-z$ is in $P$,
and we have that $w = v+z \in P+\Z^m$.

We are now reduced to prove that $Q(A)$ is $\epsilon$-dense in $\R^m$, for each $\epsilon$
satisfying the inequality stated in the theorem. Applying the involution
\[
  (x_1,x_2,\dots,x_{m-1},x_m) \mapsto (x_m,x_{m-1},\dots,x_2,x_1)
\]
to $Q(A)$ we obtain a set which clearly has the same $\epsilon$-density properties as $Q(A)$, and
which is $Q(\tilde{A})$, where $\tilde{A}(x)$ is the conjugated polynomial
$\tilde{A}(x)=a_0x^d+\dots+a_{d-1}x+a_d$. Consequently, we can just prove that $Q(A)$ is
$\epsilon$-dense for
\[
 \epsilon \geq 1/M(A(x/2)) = \left(\left|a_d\right| \cdot \prod_{i=1}^d
\max\left\{\frac{1}{2},|\alpha_i|\right\}\right)^{-1},
\]
and the $\epsilon$-density for $\epsilon \geq 2^{d}/M(A(2x)) = 1/M(\tilde{A}(x/2))$ will follow from
the same estimate applied to $Q(\tilde{A})$.

To obtain this estimate, note first that $Q = Q(A)$ will be $\epsilon$-dense if and only if
the image of $I_\epsilon=I_{\epsilon,m}=[-\nicefrac{\epsilon}{2},\nicefrac{\epsilon}{2}]^m$
via the map $\pi_\ell \circ \mm{A}{\ell}$ is the whole $\T^\ell$. Indeed, $Q$ is
$\epsilon$-dense if and only if an arbitrary vector $v\in\R^m$ is contained in $Q+I_\epsilon$, and
\begin{gather*}
 \Leftrightarrow\qquad\left(v + I_\epsilon\right) \cap Q \neq \emptyset\quad\forall v\in\R^m\\
 \Leftrightarrow\qquad\left(w + \mm{A}{\ell}\cdot I_\epsilon\right) \cap \Z^\ell \neq
\emptyset\quad\forall w\in\R^\ell
\end{gather*}
applying the matrix $\mm{A}{\ell}$ to the expression, and denoting with
$\mm{A}{\ell}\cdot{}I_\epsilon$ the image of $I_\epsilon$ under the map $\mm{A}{\ell}$.  This
passage must be justified because the matrix $\mm{A}{\ell}$ does not have rank $m$, but since $Q$
contains \emph{all} the vectors that are mapped to $\Z^\ell$ the first intersection will be
non-empty whenever the second one is (the other direction is trivial).

We will now factor $A(x)$ as a product of polynomials $B(x),C(x) \in \R[x]$, with a consequent
factorization of $\mm{A}{\ell}$, and prove two different estimates for each of the two factors
$B(x)$ and $C(x)$. We will select \emph{a posteriori} the factorization providing the best
compound estimate.

So, let $B(x)=\sum_{i=0}^s b_ix_i$ and $C(x)=\sum_{i=0}^t c_ix_i$, with
$A(x)=B(x)\cdot{}C(x)$, and recall the induced matrix factorization $\mm{A}{\ell} = \mm{B}{\ell}
\cdot \mm{C}{\ell+s}$. To prove that
\[
  \pi_\ell \circ \mm{A}{\ell} = \pi_\ell \circ \mm{B}{\ell} \circ \mm{C}{\ell+s}
\]
is surjective from $I_{\epsilon,m}$ to $\T^\ell$, we can just prove that $\pi_\ell \circ
\mm{B}{\ell}$ is surjective from $I_{\delta,\ell+s}$ for some $\delta$, and that
$\mm{C}{\ell+s}\cdot I_{\epsilon,m}$ contains $I_{\delta,\ell+s}$. We anticipate that $C(x)$ will be
chosen monic and with all roots $<1$ in absolute value.

\subsection{Estimate for \texorpdfstring{$B(x)$}{B(x)}}
We can take $\delta = 1/b_0$. Let us show that the image the of cube $I_{\delta,\ell+s}$ under
$\mm{B}{\ell}$ assumes each value modulo $\Z^\ell$, starting with an arbitrary
$v=(v_1,\dots,v_{\ell}) \in \R^\ell$ and building inductively a vector $w = (w_1,\dots,w_{\ell+s})$
in $I_{\delta,\ell+s}$ such that $\mm{B}{\ell}\cdot w - v \in \Z^\ell$. Suppose that
$w\in{}I_{\delta,\ell+s}$ is such that the components with index $>i$ of $\mm{B}{\ell}\cdot w-v$ are
in $\Z$, and observe that while $w_{i}$ varies in the interval
$[-\nicefrac{\delta}{2},\nicefrac{\delta}{2}]$ the $i$-th component of $\mm{B}{\ell}\cdot w$ varies
in an interval large $b_0\delta = 1$, while all components with index $>i$ of $\mm{B}{\ell}\cdot w$
stay fixed. Consequently we can select $w_{i}$ in the interval
$[-\nicefrac{\delta}{2},\nicefrac{\delta}{2}]$ to ensure that all components with index $\geq i$ of
$\mm{B}{\ell}\cdot w - v$ are in $\Z$. Repeating this procedure we construct a $w$
with the required properties and our claim follows.

\subsection{Estimate for \texorpdfstring{$C(x)$}{C(x)}} If $ C(x) = \sum_{i=0}^t c_ix^i =
\prod_{i=1}^t \left(x-\gamma_i\right),$ we prove that it is possible to take
\[
 \epsilon = \delta \cdot \prod_{i=1}^t \frac{1}{1-|\gamma_i|}
\]
as follows. Rather than working with $\mm{C}{\ell+s}$ we work with the nonsingular square matrix
$\{C\}_m$, if we prove that the image under $\{C\}_m$ of $I_{\epsilon,m}$ contains $I_{\delta,m}$
our claim will follow, since the image under $\mm{C}{m}$ is a just projection on the last $\ell+s$
coordinates of the image under $\{C\}_m$.

Now, rather than proving that the image under $\{C\}_m$ of $I_{\epsilon,m}$ contains $I_{\delta,m}$,
it's easier proving that the image under $\{C\}_m^{-1}$ of $I_{\delta,m}$ is contained in
$I_{\epsilon,m}$. In particular, $\{C\}_m$ factors as
\[ \{C\}_m = \prod_{i=1}^t \{x-\gamma_i\}_m, \]
and the inverse of a matrix of the form $\{x-\gamma\}_m$, for $\gamma \in \{\gamma_i\}_{1\leq
i\leq t}$, is easily
computed as
\[
\begin{pmatrix}
 1 &  &    \\
 -\gamma  & 1 &    \\
   & -\gamma  & 1 &    \\
   &   & -\gamma   & \ddots &    \\
   &   &   &   \ddots      & 1   & \\
   &   &   &        & -\gamma   & 1  \\
\end{pmatrix}^{-1} =
\begin{pmatrix}
 1         &          &           &             &       &       \\
 \gamma    &   1      &           &             &       &       \\
 \gamma^2  &  \gamma  &   1       &             &       &       \\
 \gamma^3  & \gamma^2 & \gamma    & \ddots      &       &       \\
 \vdots    &          & \vdots    & \ddots      &    1  &       \\
 \gamma^{m-1} & \gamma^{m-2} & \dots & \ \gamma^2\  & \ \ \gamma\ \  & \ \ 1\ \  \\
\end{pmatrix}.
\]
This shows that if a (possibly complex) vector $v=(v_1,\dots,v_m)$ has all components with absolute
value $\leq \rho$ for some real number $\rho>0$, the vector $w$ obtained applying the matrix
$\{x-\gamma\}_m^{-1}$ will have components of the form
\[ w_{r+1} =  \sum_{i=0}^r \gamma^{i} v_{r+1-i} \]
for some $0\leq r < m$, and their absolute value can be estimated as
\[
  \left|\sum_{i=0}^r \gamma^i v_{r+1-i} \right| 
    \leq \sum_{i=0}^r \left|\gamma^i\right|\cdot \left| v_{r+1-i} \right|
    \leq \sum_{i=0}^r \left|\gamma^i\right|\cdot \rho \leq \frac{1}{1-|\gamma|}\rho.
\]
Since $\rho$ is arbitrary we obtain, applying iteratively $\{x-\gamma_i\}_m^{-1}$ for $i=1,\dots,t$,
that the set of complex vectors with all components $<\delta$ in absolute value is mapped by
$\{C\}_m^{-1}$ to complex vectors whose components have absolute value at most
$\epsilon$. Consequently $I_{\delta,m}$ is mapped into $I_{\epsilon,m}$, being $\{C\}_m^{-1}$ a
matrix with real entries.

\subsection{Conclusion} Let $A(x) = B(x)C(x)$ be a real factoriziation of $A(x)$, with
\[
  B(x) = \sum_{i=0}^s b_i x^i = a_d \cdot \prod_{i=1}^s \left(x-\beta_i\right),
  \qquad C(x) = \sum_{i=0}^t c_i x^i = \prod_{i=1}^t \left(x-\gamma_i\right),
\]
and with $C(x)$ monic with all roots $<1$ in absolute value. The above estimate shows that $Q$ is
$\epsilon$-dense for each $\epsilon$ at least
\[ \frac{1}{|a_d|} \cdot \prod_{i=0}^s \frac{1}{|\beta_i|} \cdot \prod_{i=1}^t
\frac{1}{1-|\gamma_i|},\]
and consequently for $\epsilon$ at least
\[
  \frac{1}{|a_d|} \cdot \prod_{i=0}^d \min\left\{\frac{1}{|\alpha_i|},
   \frac{1}{1-|\alpha_i|}\right\} 
  = \left(\left|a_d\right| \cdot \prod_{i=1}^d \max\Big\{|\alpha_i|,
   1-|\alpha_i|\Big\}\right)^{-1},
\]
because we can take as $\gamma_i$ precisely the $\alpha_i$ with absolute value $\leq 1/2$,
and as $\beta_i$ the remaining roots of $A(x)$ (note that $C(x)$ will have real coefficients). Since
this last expression is clearly not greater than $1/M(A(x/2))$, the proof is complete.
\end{proof}

\begin{remark}
  While this Theorem gives an $\epsilon$ providing $\epsilon$-density for $S_m$ which is good for
  each $m$, and by Theorem \ref{teo2} this cannot be smaller than $1/M(A)$, a consideration on the
  dependence on $m$ of the best possible constant $\epsilon_m$ should be added. Discarding the
  $m\leq{}d$, for which $S_m$ is dense and hence $\epsilon_m = 0$, for $m=d+1$ the matrix
  $\mm{A}{d+1}$ is $(a_0, a_1, \dots, a_d)$ and since we must have
  $\mm{A}{d+1}\cdot{}I_\epsilon+\Z=\R$, $\epsilon$ should be at least
\[ 
\frac{1}{\sum_{i=0}^d|a_i|} \geq \frac{1}{\sum_{i=0}^d\binom{d}{i} M(A)} = \frac{1}{2^d \cdot M(A)}.
\]
Consequently the first non-trivial example already requires a constant of the order of $1/M(A)$, up
to a constant depending only on $d$.
\end{remark}

\section{Integral linear recurrences of fixed length}

In this section we prove Theorem \ref{teo3}, it will be proved in full strength for its
independent interest, even if only a small corollary is required to prove Theorem \ref{teo2}. All
conclusions obtained in $\Z_p$ can be lifted to $\Z$ with arbitrary approximation with
respect to the $p$-adic absolute value.

Let $A(x) = \sum_{i=0}^d a_i x^i$ be a primitive integral polynomial of degree $d$ with
$a_0\neq{}0$, and for $m>d$ let $\Lambda_m$ be the $\Z$-module of the integral vectors in $\Z^m$
making a recurrence sequence determined by $A(x)$. It is also the module of the integral
vectors in the kernel of the matrix $\mm{A}{m-d}$, so it has rank $d$.  The $\Z_p$-module
$\Lambda_m\otimes_\Z\Z_p$ over the ring of $p$-adic integers $\Z_p$ is clearly equal to the
set of vectors in $\Z_p^m$ annihilated by $\mm{A}{m-d}$ and will be denoted by
$\Lambda_m^{(p)}$.

\begin{proof}
  Let $N = (N_{i,j})$ be the rational $d\times m$ matrix obtained putting $N_{i,j} = \delta_{ij}$
  for $1\leq i,j \leq d$, and inductively defining the remaining elements in each row to form a
  linear recurrence determined by $A(x)$. We prove that the square matrix
  $N_\xi=(N_{i,\xi_j})_{1\leq i,j\leq d}$ is non-singular for $\xi=(1,2,\dots,w,m-d+w+1,\dots,m)$,
  for all $w = w_0,\dots,w_r$ which are the ordinate of a vertex of the Newton polygon of $A(x)$.

If $A(x)$ has distinct roots $\alpha_1,\dots,\alpha_d$ the matrix $N$ is given by
$V^{-1} \cdot L$, where
\[
  V = (\alpha_i^{j-1})_{1\leq i,j \leq d},\qquad L = (\alpha_i^{j-1})_{\substack{1\leq i \leq
d\\1\leq j \leq m}}.
\]
To obtain a formula for the determinant of $N_\xi$ valid for general $\alpha_i$, let us work over
$\C$ and suppose for a moment that $\frac{1}{a_d}A(x) = \prod_{i=1}^d (x-\alpha_i)$ where the
$\alpha_i$ are algebraically independent over $\C$. The determinant of
$N_\xi=(N_{i,\xi_j})_{1\leq{}i,j\leq d}$ is equal to the determinant of $L_\xi =
(L_{i,\xi_j})_{1\leq i,j\leq d}$ divided by $\det V$, and this turns out to be the Schur function
$s_\lambda$ (see \cite{macdonald1995}) associated to the partition $\lambda$ defined as
$\lambda_{d-1+i} = \xi_i-i+1$ for $1\leq i \leq d$, evaluated in
$\alpha_1,\dots,\alpha_d$. Applying the definition of $N$ by linear recurrence, note
that the entries of $N$ are polynomial functions in the elementary symmetric functions of the
$\alpha_i$, and hence polynomials in the $\alpha_i$, and the determinant of each submatrix is
a polynomial function of the $\alpha_i$ as well. For $\alpha_i$ varying outside
of the closed algebraic set defined by $\prod_{i>j}(\alpha_i-\alpha_j) = 0$, the determinant of
$N_\xi$ is equal to the polynomial $s_\lambda(\alpha_1,\dots,\alpha_d)$, and consequently such
expression will hold for each value of the $\alpha_i$.

If $\xi$ is defined as above, $\lambda$ has precisely $d-w$ parts all equal to $m-d$, and its
conjugate partition $\lambda'$ is formed by $m-d$ parts equal to $d-w$. Recall now Jacobi-Trudi's
identity
\[
  s_\lambda = \det (e_{\lambda_i'-i+j})_{1\leq i,j \leq k},
\]
where the $e_i$'s are the elementary symmetric functions, which holds for each $k$ at least as big
as the number of parts of $\lambda'$. We are reduced to prove that a matrix of the form
$(e_{d-w-i+j})_{1\leq i,j \leq m-d}$ is non-singular. After evaluation in the roots we have
$e_i=(-1)^ia_{d-i}/a_d$, and flipping the sign of rows and columns of even index we can consider the
determinant of
\[
  U = (a_{w+i-j})_{1\leq i,j \leq m-d}
\]
up to possibly a sign, and discarding a factor $a_d^{-m+d}$. Note that all entries on the diagonal
are equal to $a_w$.

If $w=w_0=0$ (resp. if $w=w_r=d$) then the matrix is lower (resp. upper) triangular with $a_w\neq0$
on the diagonal, and consequently non-singular. Suppose $w = w_k$ for some $1<k<r$, let $\rho \in K$
be an element with $p$-adic valuation equal to $\sigma_k$, and let $R$ be the diagonal matrix with
$1,\rho,\rho^2,\dots,\rho^{m-d-1}$ on the diagonal. The matrix
\[
  \frac{1}{a_w} R^{-1} \cdot U \cdot R = \left(\frac{a_{w+i-j}\rho^{j-i}}{a_w}\right)_{1\leq i,j
\leq m-d}
\]
has all entries in $\mathcal{O}_K$, and is upper unitriangular when reduced modulo $\mathfrak{m}$
because it has all $1$ on the diagonal, and starting from $a_w$ the $p$-adic valuation increases at
a rate bigger than $\sigma_k$.

Consequently $N_\xi$ is non-singular, and we can consider the matrix
$Q=N_\xi^{-1}\cdot{}N$. We obtain that the matrix $M$ must be unique: indeed, assume by
  contradiction $M'$ to satisfy the same properties, and suppose that the $i$-th rows
  differ. If $(v_j)_{1\leq j \leq m}$ is the difference vector of those rows we must have
$v_j = 0$ unless $w_k<j\leq{}m-d+w_k$ for some $k$, and taking $w=w_k$ we have that $(v_j)$
cannot be a linear combination of the rows of $Q$. But $Q$ has rank $d$, and the existence of
$(v_j)$ would imply that the module of linear recurrences $\Lambda_m \otimes_\Z \Q$ has rank $>d$,
which is absurd.

Suppose now that $w=w_k$ with $1 \leq k < s$, so that the slope $\sigma_k$ is $\leq 0$ by definition
of $s$. We show that all entries in the rows with indices $w_{k-1}+1,\dots,w_k-1,w_k$ are integral,
and that the valuation of $Q_{i,j}$ for $w_{k-1} < i \leq w_k$ and $j \geq i$ is at least
$-\sigma_k(j-i)$.

In fact, let $\rho$ have valuation equal to $\sigma_k$ and consider the polynomial
\[
   B(x) = \sum_{i=0}^d \frac{\rho^{w-i} a_i}{a_w} x^i = \sum_{i=0}^d b_i x^i \ \ \in
\mathcal{O}_K[x].
\]
Note that the Newton polygon of $B(x)$ is obtained by the Newton polygon of $A(x)$ by subtracting
the linear affine function $f(x) = \sigma_k(x - w) + v_p(a_w))$, and the $k$-th side of the Newton
polygon is moved to lay on the horizontal axis. In particular, the coefficients $b_{w_{k-1}}$
and $b_{w_k}$ of $B(x)$ are $\not\equiv 0\imod{\mathfrak{m}}$, but $b_i \equiv 0\imod{\mathfrak{m}}$
for $i<w_{k-1}$ or $i > w_k$.

On the other hand, for each $w_{k-1} < i \leq w_k$ the vector $(\rho^{j-i}Q_{i,j})_{1\leq j \leq m}$
is a linear recurrence determined by $B(x)$, and we claim that all entries are in
$\mathcal{O}_K$. Indeed, suppose this is not the case, and multiply its entries by the
smallest power of the uniformizer $\pi$ required to make all entries in $\mathcal{O}_K$. Some entry
will be in $\mathcal{O}_K \setminus \mathfrak{m}$, but the first $\ell_k = w_k$ entries will be in
$\mathfrak{m}$. When reduced modulo $\mathfrak{m}$, the subvector $(\rho^{j-i}Q_{i,j})_{w_{k-1} < j
  \leq m-d+w_k}$ is a recurrence determined by the polynomial
\[
  C(x) = \sum_{i=0}^{\ell_k} \overline{b_{w_{k-1}+i}}x^i = \sum_{i=0}^{\ell_k} c_i x^i\ \ \in
\left(\mathcal{O}_K/\mathfrak{m}\right)[x].
\]
This recurrence of order $\ell_k$ in $\mathcal{O}_K/\mathfrak{m}$ is supposed to have non-zero
entries, while the first $\ell_k$ entries are zero. This is absurd, and the claim on the
  integrality of the $w_{k-1}+1,\dots,w_k-1,w_k$ is proved.

The matrix $(\rho^{j-i}Q_{i,j})_{\substack{w_{k-1} < i \leq
w_k\\m-d+w_{k-1} < j \leq m-d+w_k}}$ is invertible, because modulo $\mathfrak{m}$ it is obtained
as the $(m-d)$-th power of the companion matrix
\[
  \left(
  \begin{array}{ccccc}
  0 & & & & -c_0/c_{\ell_k} \\
  1 & 0 & & & -c_1/c_{\ell_k} \\
  & 1 & 0 & & -c_2/c_{\ell_k} \\
  & & \ddots & \ddots & \vdots \\
  & &  & 1 & -c_{\ell_k-1}/c_{\ell_k} \\
  \end{array}
  \right)
\]
of $C(x)$, which is invertible. Consequently $(Q_{i,j})_{\substack{w_{k-1} < i \leq
w_k\\m-d+w_{k-1} < j \leq m-d+w_k}}$, which is obtained by it conjugating by the diagonal matrix
with diagonal $1,\rho,\dots,\rho^{\ell_k-1}$ and multiplicating by a factor $\rho^{-(m-d)}$, has the
same valuation as $\rho^{-\ell_k(m-d)}$, i.e. equal to $-\sigma_k\ell_k(m-d)$.

In this way we have built the rows from $w_{k-1}+1$ to $w_k$ of $M$, and proved that $C_k =
(Q_{i,j})_{\substack{w_{k-1} < i \leq w_k\\m-d+w_{k-1} < j \leq m-d+w_k}}$ has determinant with
valuation $-\sigma_k\ell_k(m-d)$, while $B_k = (Q_{i,j})_{w_{k-1} < i,j \leq w_k}$ is the
identity. For $s \leq k \leq r$ we can clearly proceed in a symmetrical way, taking $C_k$ equal to
the identity and proceeding on the left to $B_k$, which will have determinant equal to
$\sigma_k\ell_k(m-d)$.

The matrix we have built selecting at each step the rows from $w_{k-1}+1$ to $w_k$ of $Q$ clearly
satisfies all requirements for the matrix $M$. Furthermore all rows are linearly
independent, so the module they generate over $\Z_p$ has rank $d$.

To prove that the rows of $M$ generate all of $\Lambda_m^{(p)}$ observe that they generate a
$\Z_p$-module contained in $\Lambda_m^{(p)}$, and suppose the generated module to be strictly
contained. A basis of $\Lambda_m^{(p)}$ can be obtained by left multiplication by a
matrix $B$ with determinant $\in \Q_p\setminus \Z_p$. However the matrix
$M_\xi=(M_{i,\xi_j})_{1\leq{}i,j\leq{}d}$ for $\xi = (1,\dots,w_s,m-d+w_s+1,\dots,m)$ has
determinant $1$, and $B \cdot M_\xi$ (and consequently $B \cdot M$) would not have coefficients in
$\Z_p$, which is absurd.
\end{proof}

Let us now turn to the $\Z$-module $\Lambda_m$ again. If $E$ is a finitely generated $\Z$-module and
$F\subseteq E$ a submodule with index $n$, we have that the index of $F\otimes_\Z\Z_p$ in
$E\otimes_\Z\Z_p$ is precisely the biggest power of $p$ dividing $n$.

Let $\Theta_m \subseteq \Z^d \times \Q^{m-d}$ be the $\Z$-module of the linear recurrences
determined by $A(x)$ such that the first $d$ coordinates are in $\Z$, and similarly let
$\Theta^{(p)}_m = \Theta_m \otimes_\Z \Z_p$. The matrix $N$ of the proof of the Theorem is clearly a
$\Z$-basis of $\Theta_m$.

For each prime $p$ the matrix $M$ is equal to $M_\xi \cdot N$ where
$M_\xi=(M_{i,\xi_j})_{1\leq{}i,j\leq d}$ and $\xi=(1,2,\dots,d)$, and as it is possibile to verify
immediately $M_\xi$ has determinant with the same $p$-adic valuation as $a_d^{m-d}$, because for
$s\leq{}k\leq{}r$ the matrix $B_k$ has determinant with valuation equal to $\sigma_k\ell_k(m-d) =
\left(v_p(a_{w_k})-v_p(a_{w_{k-1}})\right)(m-d)$.

Consequently $(\Theta_m^{(p)}:\Lambda_m^{(p)})$ is equal to the biggest power of $p$ dividing
$a_d^{m-d}$, for each $p$. We have proved:

\begin{corollary} \label{cor1}
The module $\Lambda_m$ has index equal to $|a_d|^{m-d}$ in $\Theta_m$.
\end{corollary}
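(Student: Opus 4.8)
The plan is to reduce the global index $(\Theta_m:\Lambda_m)$ to a prime-by-prime computation. Both $\Theta_m$ and $\Lambda_m$ are free $\Z$-modules of rank $d$ with $\Lambda_m\subseteq\Theta_m$, so the index is finite; by the elementary fact recalled just above the statement, for each prime $p$ the exact power of $p$ dividing $(\Theta_m:\Lambda_m)$ coincides with the index $(\Theta_m^{(p)}:\Lambda_m^{(p)})$, which is itself a power of $p$. Hence it suffices to show that $(\Theta_m^{(p)}:\Lambda_m^{(p)})=p^{v_p(a_d^{m-d})}$ for every $p$, and then multiply over all primes.

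For a fixed $p$, the matrix $N$ appearing in the proof of Theorem \ref{teo3} is simultaneously a $\Z$-basis of $\Theta_m$ and a $\Z_p$-basis of $\Theta_m^{(p)}$, while by Theorem \ref{teo3} the rows of the associated matrix $M$ form a $\Z_p$-basis of $\Lambda_m^{(p)}$. Reading off the first $d$ columns of $M$ and using that a linear recurrence determined by $A(x)$ is uniquely determined by its first $d$ entries, one obtains $M = M_\xi\cdot N$ with $\xi=(1,2,\dots,d)$ and $M_\xi=(M_{i,\xi_j})_{1\leq i,j\leq d}$. Thus $M_\xi$ is precisely the change-of-basis matrix carrying the $\Z_p$-lattice $\Theta_m^{(p)}$ onto its sublattice $\Lambda_m^{(p)}$, so $(\Theta_m^{(p)}:\Lambda_m^{(p)}) = |\det M_\xi|_p^{-1} = p^{v_p(\det M_\xi)}$.

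It remains to evaluate $v_p(\det M_\xi)$. By part (1) of Theorem \ref{teo3} the matrix $M_\xi$ is block upper triangular with diagonal blocks $B_1,\dots,B_r$, so $v_p(\det M_\xi)=\sum_{k=1}^r v_p(\det B_k)$. By part (2) the blocks $B_1,\dots,B_{s-1}$ are identity matrices, while for $s\leq k\leq r$ one has $v_p(\det B_k)=\sigma_k\ell_k(m-d)=\big(v_p(a_{w_k})-v_p(a_{w_{k-1}})\big)(m-d)$. The sum telescopes to $\big(v_p(a_{w_r})-v_p(a_{w_{s-1}})\big)(m-d) = \big(v_p(a_d)-v_p(a_{w_{s-1}})\big)(m-d)$, and since $w_{s-1}$ is the vertex of the Newton polygon at which the slopes change from negative to non-negative, it realizes $\min_{0\leq i\leq d}v_p(a_i)$, which equals $0$ because $A(x)$ is primitive. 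Hence $v_p(\det M_\xi)=(m-d)\,v_p(a_d)=v_p(a_d^{m-d})$, and multiplying over all primes gives $(\Theta_m:\Lambda_m)=\prod_p p^{v_p(a_d^{m-d})}=|a_d|^{m-d}$.

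I do not expect a genuine obstacle here once Theorem \ref{teo3} is available: the argument is essentially a bookkeeping assembly of the local index formula with the block structure of $M$. The one point that deserves care is the identification $v_p(a_{w_{s-1}})=0$, i.e. that the distinguished vertex separating negative from non-negative slopes lies on the horizontal axis of the Newton polygon; this is exactly the place where primitivity of $A(x)$ is used, and it is what makes the telescoped valuation collapse to $v_p(a_d^{m-d})$ rather than to a correction involving an intermediate coefficient.
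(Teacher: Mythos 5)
Your argument is correct and is essentially the paper's own proof: reduce the index to its $p$-parts, write $M=M_\xi\cdot N$ with $\xi=(1,\dots,d)$, and compute $v_p(\det M_\xi)$ from the block structure of Theorem \ref{teo3}, the telescoping sum, and primitivity forcing $v_p(a_{w_{s-1}})=0$. The only difference is that you spell out explicitly the step the paper leaves as ``possible to verify immediately,'' namely that the vertex separating negative from non-negative slopes has valuation $0$.
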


\section{Proof of Theorem \ref{teo2}}
Before providing the proof of Theorem \ref{teo2}, let us recall a few facts of linear algebra. Let
$\scalar{\cdot}{\cdot}$ be the standard inner product on $\R^n$, and $u_1,\dots,u_k \in \R^n$,
for $k \leq n$. The Gram matrix of the $u_i$ is the $k\times k$ matrix defined as
\[
  G(u_1,\dots,u_k) = (\scalar{u_i}{u_j})_{1\leq i,j\leq k},
\]
and its determinant can be geometrically interpreted as the square of the volume of the
parallelepiped formed by the vectors $u_i$. If the $u_i$ can be completed with
$u_{k+1},\dots,u_n\in\R^n$ to a basis of $\R^n$, and $v_1,v_2,\dots,v_n \in \R^n$ are such that they
form a pair of biorthonormal bases, then we have
\[
  G(u_1,u_2,\dots,u_n) G(v_1,v_2,\dots,v_n) = I,
\]
(see \cite[\S 66.2, pag. 66-6]{hogben2007handbook}). Since $G(u_1,\dots,u_k)$ is a minor of
$G(u_1,u_2,\dots,u_n)$ and its complementary cofactor is $G(v_{k+1},\dots,v_{n})$, if the
$u_1,\dots,u_n$ form a parallelepiped of volume $1$ (and consequently $G(u_1,\dots,u_n)$ has
determinant $1$) we
have that
\[
  \det G(u_1,\dots,u_k) = \det G(v_{k+1},\dots,v_n).
\]
This follows from the properties of compound matrices (see \cite[Chap. 1, \S 4, pag.  21,
equation (33) in particular]{gantmacher1959theory}) related to what sometimes is also called
``Jacobi's Theorem'' \cite[\S 14.16]{gradstejn2000table}.

We will also need the following technical Lemma about the asymptotic behaviour of the determinant of
a perturbed Toeplitz matrix that arises as the Gram matrix of particular sets of vectors, see
Bump-Diaconis \cite{bump2002toeplitz}, Tracy-Widom \cite{tracy2002limit} and Lyons
\cite{lyons2003szego} for general results on this topic. Let $B(x)$ be a real
polynomial of degree $d$, and let $(B_i)_{1\leq{}i\leq\ell}$ be the row vectors of the matrix
$\mm{B}{\ell}$. Let $f_1,\dots,f_q$ be a finite set of vectors in $\R^d$, which will also be
considered as vectors in $\R^{d+\ell}$ turning all extra coordinates to $0$.

\begin{lemma} \label{lem1}
We have that
\[
  \det G(f_1,\dots,f_q,B_1,\dots,B_\ell) = \Oh(M^{2\ell})
\]
for each $M > M(B)$.
\end{lemma}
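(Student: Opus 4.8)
The plan is to compute the Gram determinant $\det G(f_1,\dots,f_q,B_1,\dots,B_\ell)$ by comparing it with the Gram determinant of the $B_i$ alone and controlling the correction coming from the $f_j$. First I would observe that since the $f_j$ live in $\R^d$ (with the remaining coordinates zeroed) and $q$ is fixed, while $\ell\to\infty$, the determinant $\det G(f_1,\dots,f_q,B_1,\dots,B_\ell)$ differs from $\det G(B_1,\dots,B_\ell)$ only by a factor that is bounded polynomially in $\ell$ — indeed, expanding the Gram matrix in block form with the $(f_i\cdot f_j)$ block of fixed size $q\times q$, the $(f_i\cdot B_j)$ rectangular blocks, and the main $\ell\times\ell$ Toeplitz block $G(B_1,\dots,B_\ell) = \bigl(\sum_k b_{k}\,b_{k+|i-j|}\bigr)$, a Schur-complement expansion shows the extra determinant is $\det(\text{fixed }q\times q \text{ matrix depending on }\ell)$ times $\det G(B_1,\dots,B_\ell)$, and the entries of that $q\times q$ matrix grow at most polynomially in $\ell$ because each $f_i\cdot B_j$ is a bounded linear functional of the $b_k$'s and $(G(B_1,\dots,B_\ell)^{-1})_{ij}$ can be bounded. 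So the problem reduces to the Toeplitz determinant $D_\ell := \det G(B_1,\dots,B_\ell)$.

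The key point is that $G(B_1,\dots,B_\ell)$ is exactly the $\ell\times\ell$ Toeplitz matrix with symbol $\varphi(e^{i\theta}) = |B(e^{i\theta})|^2$, i.e. $G_{ij} = \widehat{\varphi}(i-j)$ where $\widehat{\varphi}(k) = \sum_n b_n \overline{b_{n+k}}$ (sum over the finitely many coefficients of $B$), since $B_i$ is the $i$-th row of $\mm{B}{\ell}$, a shifted copy of the coefficient vector of $B(x)$. The symbol $\varphi$ is a nonnegative trigonometric polynomial, so by the Fejér–Riesz factorization $\varphi(e^{i\theta}) = |P(e^{i\theta})|^2$ where $P$ is the polynomial of degree $d$ all of whose roots lie in $\{|z|\le 1\}$, obtained from $B$ by reflecting the roots of absolute value $>1$ inside the unit disk; crucially $M(P) = M(B)$ is unchanged by this reflection. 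For such an analytic symbol with the Szegő condition, the strong Szegő limit theorem (or a direct computation exploiting that $P$ has all roots in the closed disk) gives $D_\ell \sim c \cdot G(\varphi)^\ell$ where $G(\varphi) = \exp\bigl(\tfrac{1}{2\pi}\int_0^{2\pi}\log\varphi(e^{i\theta})\,d\theta\bigr)$, and by Jensen's formula $\tfrac{1}{2\pi}\int_0^{2\pi}\log|P(e^{i\theta})|^2\,d\theta = 2\log M(P) = 2\log M(B)$, so $G(\varphi) = M(B)^2$. Hence $D_\ell = \Oh(M(B)^{2\ell}\cdot \text{poly}(\ell)) = \Oh(M^{2\ell})$ for every $M > M(B)$, absorbing the polynomial factor into the gap between $M$ and $M(B)$.

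The main obstacle is establishing the asymptotics $D_\ell = \Oh(M(B)^{2\ell})$ with the correct geometric rate $M(B)^2$; this is where the references to Bump–Diaconis, Tracy–Widom, and Lyons are needed, since the symbol $|B(e^{i\theta})|^2$ can vanish on the circle (when $B$ has roots of modulus exactly $1$), so the classical strong Szegő theorem does not apply directly and one must invoke the Fisher–Hartwig-type refinements handling zeros of the symbol — but those refinements only modify the subexponential prefactor, leaving the exponential rate $G(\varphi)^\ell = M(B)^{2\ell}$ intact, which is all that is claimed. A secondary technical point is making precise the reduction from $\det G(f_1,\dots,f_q,B_1,\dots,B_\ell)$ to $D_\ell$: one must check that the Schur complement is well-defined, i.e. that $G(B_1,\dots,B_\ell)$ is invertible (true since $\mm{B}{\ell}$ has full rank $\ell$, its rows being linearly independent as shifts of a nonzero polynomial's coefficient vector), and that the resulting $q\times q$ correction determinant is at most polynomial in $\ell$, which follows because $\|G(B_1,\dots,B_\ell)^{-1}\|$ grows at most polynomially (the symbol being a polynomial, hence bounded, and its zeros being of finite order).
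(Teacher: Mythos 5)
Your argument is correct in substance but follows a genuinely different route from the paper. You reduce to the pure Toeplitz determinant $\det G(B_1,\dots,B_\ell)$ by a Schur-complement step and then identify the exponential rate via the Szeg\H{o}--Mahler geometric mean $\exp\bigl(\tfrac{1}{2\pi}\int_0^{2\pi}\log|B(e^{i\theta})|^2\,\de\theta\bigr)=M(B)^2$. The paper instead obtains the Toeplitz bound in an essentially self-contained way from Trench's exact formula, expanding the auxiliary determinant $G_n$ into monomials in the roots with polynomial-in-$n$ coefficients, and it handles the vectors $f_1,\dots,f_q$ not by a Schur complement but by Laplace expansion along the first $p\geq d+q$ columns combined with the Bump--Diaconis fact that minors $D^{\lambda}_{n-1}(C)$ for a fixed partition $\lambda$ are asymptotically proportional to $D_{n-1}(C)$. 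The trade-off: the paper's route is more elementary and explicit, while yours is shorter and makes the appearance of $M(B)^2$ as the geometric mean of the symbol $|B(e^{i\theta})|^2$ conceptually transparent.

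Two points in your write-up need tightening. First, your claim that the $q\times q$ correction factor grows at most polynomially in $\ell$ rests on an unproved, and not at all trivial, bound on the entries of $G(B_1,\dots,B_\ell)^{-1}$: when $B$ has roots on the unit circle the symbol vanishes and the smallest eigenvalue of the Toeplitz matrix tends to $0$, so ``the symbol is bounded and its zeros have finite order'' is not by itself a proof. Fortunately the detour is unnecessary: the Schur complement $S=G_{ff}-G_{fB}G_{BB}^{-1}G_{Bf}$ of a Gram (hence positive semidefinite) matrix satisfies $0\preceq S\preceq G_{ff}$, so $\det S\leq\det G(f_1,\dots,f_q)$, a constant independent of $\ell$; equivalently, Fischer's inequality gives $\det G(f_1,\dots,f_q,B_1,\dots,B_\ell)\leq\det G(f_1,\dots,f_q)\cdot\det G(B_1,\dots,B_\ell)$ directly. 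Second, the appeal to Fisher--Hartwig refinements is heavier than needed: since only an upper bound $\Oh(M^{2\ell})$ for each fixed $M>M(B)$ is claimed, the first Szeg\H{o} limit theorem for nonnegative symbols with integrable logarithm (which does apply when $|B|^2$ vanishes on the circle, the singularities of $\log|B|^2$ being logarithmic) already yields $\det G(B_1,\dots,B_\ell)^{1/\ell}\rightarrow M(B)^2$ and hence the bound, any subexponential factor being absorbed by the strict inequality $M>M(B)$; if you do invoke Fisher--Hartwig asymptotics, you are right that pure zero-type singularities only alter the prefactor. With these repairs your proof is complete.
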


\begin{proof}
Let us start showing that
\[
\det G(B_1,\dots,B_\ell) \leq \Oh(M^{2\ell})
\]
for each $M > M(B)$. The determinant of banded Hermitiatian Toeplitz matrices can be easily
estimated via Trench's Formula \cite[Theorem~2.10, pag.~41]{bottcher2005spectral}. Let
$C(x)=\sum_{j=-r}^s{}c_jx^j$ be a Laurent polynomial, and let $g_n(z)$ be the row
\[
   g_n(z) = (1,z,z^2,\dots,z^{r-1},z^{n+r},z^{n+r+1},\dots,z^{n+r+s-1}).
\]
Let $\xi_1,\dots, \xi_k$ be the distinct roots of $C(x)$, and let $\mu_1,\dots,\mu_k$ be their
multiplicities. Define $G_n$ as the determinant of the $(r+s) \times (r+s)$ matrix $\Gamma_n$
whose first $\mu_1$ rows are $g_n(\xi_1),g_n'(\xi_1),\dots,g_n^{(\mu_1-1)}(\xi_1)$, whose next
$\mu_2$ rows are $g_n(\xi_2),g_n'(\xi_2),\dots,g_n^{(\mu_2-1)}(\xi_2)$, and so on.

Then $G_0 \neq 0$, and putting $D_{n-1}(C) = \det(c_{i-j})_{0\leq i,j \leq n-1}$ we have (by
Trench formula)
\[
   D_{n-1}(C) = (-1)^{ns} c_s^n \frac{G_n}{G_0},\qquad\text{for every }n\geq 1.
\]
Applying the formula with $C(x) = B(x)B(x^{-1})$ and $n = \ell$, we just have to show that
$c_s^nG_n=\Oh(M^n)$ for each $M > M(C) = M(B)^2$, for some $K(M)$. But the determinant of the
$(r+s)\times(r+s)$ matrix $\Gamma_n$ can be expanded as sum of monomials in the $\xi_i$ having
polynomials in $n$ as coefficients, where in each monomial $\xi_i$ appears with exponent smaller
than $(n+r+s)\mu_i$. Consequently estimating each monomial the determinant is
$\leq{}P(n)M(\frac{1}{c_s}C)^{n+r+s}$ for some polynomial $P(n)$ in $n$. Therefore
$c_s^nG_n=\Oh(M^n)$ for each $M > M(C)$, as required.

Now we have from Bump-Diaconis \cite{bump2002toeplitz} that a minor of a Toeplitz matrix obtained
deleting the first $r$ columns and a fixed set of rows, i.e. of the form
$D_{n-1}^\lambda(C)=\det(c_{\lambda_i-i+j})_{1\leq i,j \leq n}$ for some fixed partition
$\lambda=(\lambda_1,\lambda_2,\dots)$, is asymptotic for $n\rightarrow \infty$ to
$K\cdot{}D_{n-1}(C)$ for some constant $K$. On the other hand, let us expand the determinant of the
matrix
\[
  (G_{i,j})_{1\leq i,j \leq q+\ell} = G(f_1,\dots,f_q,B_1,\dots,B_\ell)
\]
along the first $p \geq d+q$ columns: the expression obtained is a sum of the form
\[ 
 \sum_{i_1 < \dots < i_p} \det(G_{i_u,v})_{1\leq u,v \leq p} \cdot C_{12\dots{}p}^{i_1i_2\dots{}i_p}(G),
\]
where the $\det(G_{{i_u},v})$ is non-zero only for a finite number of choices of the rows
$i_1,\dots,i_p$, while for $i_1,\dots,i_p$ fixed the cofactor $C_{12\dots{}p}^{i_1i_2\dots{}i_p}(G)$
is $\pm 1$ times a determinant of the form $D_{n-1}^\lambda(C)$, where $C(x) = B(x)B(x^{-1})$
and $\lambda$ is a partition depending only in the $i_1,\dots,i_p$. Consequently the
expansion is a sum of a fixed number of terms that are $\Oh(M^{2\ell})$ for each $M > M(B)$, and
hence is $\Oh(M^{2\ell})$ too.
\end{proof}

We require also the following Lemma, which provides for $m=\ell+d$ a basis of the $d$-dimentional
the lattice $\Lambda_m$ of vectors in $\Z^m$ killed by $\mm{A}{\ell}$ (i.e. form a linear recurrence
determined by $A(x)$) which has good properties with respect to the Gramian:

\begin{lemma} \label{lem2}
  There exist for each $m$ a basis $\omega_1,\dots,\omega_d$ of the lattice $\Lambda_m$ such that
  \[
   G(\omega_{r_1},\dots,\omega_{r_p}) \leq K \cdot M^{2m}
\]
for each subset of the $\omega_i$ and for each $M>M(A)$, for some constant $K$ not dependent of $m$.
\end{lemma}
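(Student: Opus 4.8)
The plan is to produce the desired basis $\omega_1,\dots,\omega_d$ of $\Lambda_m$ by starting from the $p$-adically well-behaved bases furnished by Theorem~\ref{teo3} and combining them across a finite set of bad primes, while controlling the Archimedean size of all the $p\times p$ Gram minors through the explicit structure of those bases. First I would observe that by Corollary~\ref{cor1} the index of $\Lambda_m$ inside $\Theta_m$ is $|a_d|^{m-d}$, so the only primes at which $\Lambda_m$ differs from the ``obvious'' lattice spanned by the rows of the matrix $N$ are the primes dividing $a_d$; call them $p_1,\dots,p_h$. For each such $p_j$, Theorem~\ref{teo3} gives a distinguished $\Z_{p_j}$-basis whose entries have $p_j$-adic valuations governed by the slopes of the Newton polygon, hence in particular are bounded below in valuation by a linear function of the column index. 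The plan is to glue these local bases by the Chinese Remainder Theorem (working in $\prod_j \Z_{p_j}$, or equivalently choosing a global integral change of basis congruent to each local one modulo a high power of $p_j$) to obtain a single $\Z$-basis $\omega_1,\dots,\omega_d$ of $\Lambda_m$ whose entries, at every prime dividing $a_d$, satisfy the valuation bounds of Theorem~\ref{teo3}, and which are integral away from those primes.

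Next I would bound the entries of the $\omega_i$ in absolute value. Each $\omega_i$ is a linear recurrence determined by $A(x)$, hence its $j$-th entry is (up to the leading coefficient $a_d$) a $\Z$-linear combination, with bounded coefficients, of $\alpha_1^{j},\dots,\alpha_d^{j}$; more precisely, writing $\omega_i$ in terms of the companion matrix of $A(x)$ one gets $|\,(\omega_i)_j\,| \le K' \cdot \big(\max_k|\alpha_k|\big)^{j} \cdot (\text{something polynomial in } j)$ after clearing the denominator coming from $a_d$. Since $m - d = \ell$ and $j \le m$, and since $|a_d|\cdot\prod_k\max\{1,|\alpha_k|\} = M(A)$, every entry of every $\omega_i$ is $\Oh_\epsilon\!\big((M(A)+\epsilon)^{m}\big)$; combined with the $p$-adic integrality at the primes dividing $a_d$ one checks the entries are genuinely controlled (the $p$-adic normalization is exactly what keeps the ``$1/a_d$'' from blowing up the size). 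Then $\langle \omega_{r_u},\omega_{r_v}\rangle$ is a sum of $m$ products of such entries, so $|\langle \omega_{r_u},\omega_{r_v}\rangle| \le K'' \cdot m \cdot M^{2m}$ for $M>M(A)$, and expanding the $p\times p$ Gram determinant $G(\omega_{r_1},\dots,\omega_{r_p})$ as a signed sum of $p!$ products of $p$ such inner products gives the bound $K\cdot M^{2m}$ after absorbing the polynomial factors in $m$ into a slightly larger $M$ (using that $(M(A)+\epsilon')^{2m}$ dominates $m^{p}\,M(A)^{2m}$ for every $\epsilon'>0$ and $p\le d$ fixed).

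The main obstacle I expect is making the constant $K$ genuinely \emph{independent of $m$}. The naive bound above has a factor $p!\,m^{p}$ which is bounded for fixed $d$ but the more serious issue is the constant hidden in passing from $M(A)$ to $M>M(A)$: one must check that the implied constants coming from (i) the local-to-global gluing and (ii) the companion-matrix expansion of the $\omega_i$ do not secretly depend on $m$. For (i), the change-of-basis matrix between $N$ and the rows of $M$ in Theorem~\ref{teo3} is $M_\xi$, which has $p$-adic valuation matching $a_d^{m-d}$ but whose \emph{entries} could a priori grow with $m$; here one uses precisely the triangular/block structure and the slope bounds in Theorem~\ref{teo3} to see that, after the diagonal conjugation by powers of a uniformizer that appears in that proof, the entries are units times bounded quantities, so their contribution is absorbed into the exponential factor $M^{2m}$ rather than into $K$. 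For (ii), one uses that the coefficients of $\omega_i$ in the basis $\{1,x,\dots,x^{d-1}\}$ of $\Q[x]/(A)$ are fixed rationals (depending only on $A$, not on $m$), and that the operator norm of the companion matrix contributes only the base $\max_k|\alpha_k|$. Once these two points are nailed down, choosing $M' $ with $M(A) < M' < M$ and noting $K_0\, m^{d}\, (M')^{2m} \le K\, M^{2m}$ for a suitable $m$-independent $K$ finishes the proof.
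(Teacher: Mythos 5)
Your argument fails at the decisive step, and the failure is not about constants: if each inner product $\scalar{\omega_{r_u}}{\omega_{r_v}}$ is only known to be of size $\Oh(m\,M^{2m})$, then expanding the $p\times p$ Gram determinant as a signed sum of $p!$ products of $p$ such inner products gives a bound of order $M^{2pm}$, not $M^{2m}$. For $p\geq 2$ the exponent is off by the factor $p$, and no absorption of polynomial factors in $m$ into a slightly larger $M$ can fix an exponentially wrong bound. Indeed a single recurrence vector already has squared length of order $M^{2m}$ in general, so for instance when $d=p=2$ one has $G(\omega_1,\omega_2)=|\omega_1|^2|\omega_2|^2-\scalar{\omega_1}{\omega_2}^2$ with both terms of order $M^{4m}$; the whole content of the lemma is that this difference collapses to $\Oh(M^{2m})$, i.e.\ that the volume spanned by several recurrence vectors is vastly smaller than the product of their lengths. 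Such cancellation is invisible to entrywise size estimates, which is exactly why the paper needs Lemma~\ref{lem1}: the vectors $\zeta_i$ with Kronecker-delta initial conditions are biorthonormal to $e_1,\dots,e_d$ in the basis $e_1,\dots,e_d,B_1,\dots,B_\ell$ (where $B=a_d^{-1}A$ and the $B_i$ are the rows of $\mm{B}{\ell}$), so by Jacobi's theorem the Gram determinant of any subset of the $\zeta_i$ equals that of the complementary $e_{s_j}$'s together with $B_1,\dots,B_\ell$, and the perturbed Toeplitz estimate of Lemma~\ref{lem1} (Trench's formula, Bump--Diaconis) then gives $\Oh(N^{2\ell})$ for $N>M(B)$. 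Your proposal never invokes Lemma~\ref{lem1} or any Szeg\H{o}/Toeplitz-type asymptotics, and with the tools you use the claimed exponent cannot be reached.

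A secondary point: the local-to-global gluing of the bases from Theorem~\ref{teo3} is both unnecessary and delicate, since Theorem~\ref{teo3} carries no archimedean information about its basis vectors, which is what the Gramian bound requires. The paper's route is much lighter here: take the $\zeta_i$ as a basis of $\Theta_m$, use Corollary~\ref{cor1} to get $(\Theta_m:\Lambda_m)=|a_d|^{\ell}$, and pass to a basis of $\Lambda_m$ by a change-of-basis matrix $W$ in Hermite Normal Form of determinant $|a_d|^{\ell}$; an exterior-power estimate bounds the Gramians of the $\omega_i$ by $\det(W)^2$ times Gramians of the $\zeta_i$, and the factor $|a_d|^{2\ell}$ is exactly compensated because the Toeplitz bound is in terms of $M(B)=M(A)/|a_d|$. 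If you want to salvage your write-up, the $p$-adic machinery should be dropped and the cancellation mechanism (duality plus Lemma~\ref{lem1}, or some equivalent determinant identity) must be supplied; without it the lemma is not proved.
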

\begin{proof}
  To construct the required basis, let $\zeta_1,\dots,\zeta_d \in \Q^m$ be such that the $j$-th
  coordinate of $\zeta_i$ is $\delta_{ij}$ for $1\leq i,j\leq d$, and define the remaining
  coordinates by the linear recurrence determined by $A(x)$. The $\zeta_i$ are clearly a basis of
  the $\Z$-module $\Theta_m$ of vectors in $\Z^d \times \Q^\ell$ which are killed by $\mm{A}{\ell}$,
  and we have by Corollary \ref{cor1} that $(\Theta_m : \Lambda_m) = |a_d|^\ell$.  Consequently, a
  basis $(\omega_i)$ of $\Lambda_m$ can be obtained by the $(\zeta_i)$ applying a matrix
  $W=(W_{i,j})_{1\leq i,j\leq d}$ with determinant $|a_d|^\ell$. Changing it by left multiplication
  by an element of $SL(d,\Z)$ the matrix $W$ can be taken in Hermite Normal Form \cite[\S
  23.2, pag. 23-6,23-7]{hogben2007handbook}, that is upper triangular and such that
  $|W_{i,j}| \leq |W_{j,j}|$ whenever $i<j$.

  We show that we can bound the Gramian of a subset of the $\omega_i$ in terms of the Gramians of
  all subsets of the $\zeta_i$ and the determinant of $(W_{i,j})$. Indeed, let
  $\omega_{r_1},\dots,\omega_{r_p}$, for $1 \leq p\leq d$ and $1\leq r_1 < \dots < r_p \leq d$, be a
  subset of the $\omega_i$. The quantity $\sqrt{G(\omega_{r_1},\dots,\omega_{r_p})}$ is the volume
  of the parallelepiped formed by the $\omega_{r_i}$ and is also equal to
\[
 \sup_{\phi \in \Phi_p} \phi(\omega_{r_1},\dots,\omega_{r_p}),
\]
the $\sup$ being taken within all the elements of the exterior power $\Lambda_p^\ast(\R^m)$ of the
form
\[
   \Phi_p = \left\{n_1\wedge n_2 \wedge \dots \wedge n_p,\text{ for orthonormal }
n_1,\dots,n_p\in\R^m\right\} \subset \Lambda_p^\ast(\R^m),
\]
where $\R^m$ is identified with the dual through $\scalar{\cdot}{\cdot}$. This can now be estimated
as
\begin{align*}
  &\ \sup_{\phi \in \Phi_p} \phi(\sum_{j=1}^d
W_{r_1,j}\zeta_j,\dots,\sum_{j=1}^d W_{r_p,j}\zeta_j) \\
  =&\ \sup_{\phi \in \Phi_p} \sum_{1\leq s_1,\dots,s_p \leq d} \phi (W_{r_1,s_1}\zeta_{s_1},\dots,
W_{r_p,s_p}\zeta_{s_p}) \\
  \leq&\  \sum_{1\leq s_1,\dots,s_p \leq d} \sup_{\phi \in \Phi_p} \ W_{r_1,s_1}\cdots W_{r_p,s_p}
 \cdot \phi (\zeta_{s_1},\dots, \zeta_{s_p})\\
  \leq&\  \sum_{1\leq s_1,\dots,s_p \leq d} |W_{s_1,s_1}\cdots W_{s_p,s_p}|\cdot \sup_{\phi \in
\Phi_p} \phi(\zeta_{s_1},\dots, \zeta_{s_p})\\
  \leq&\  \det(W_{i,j}) \cdot \sum_{1\leq s_1,\dots,s_p \leq d} \sup_{\phi \in \Phi_p}
\phi(\zeta_{s_1},\dots, \zeta_{s_p}),
\end{align*}
since we can discard the summands where $s_i = s_j$ for some $i \neq j$, and $(W_{i,j})$ is upper
triangular and with integral entries,
\begin{align*}
  \leq&\  \det(W_{i,j}) \cdot d^p \cdot \max_{1\leq s_1< \dots< s_p\leq d}
\sqrt{G(\zeta_{s_1},\dots,\zeta_{s_p})}.
\end{align*}

But let $B(x) = a_d^{-1}A(x)$ (so that $M(B) = |a_d^{-1}|\cdot M(A)$), and let
$(B_i)_{1\leq{}i\leq\ell}$ be the row vectors of the rational matrix $\mm{B}{\ell}$. The
  matrix $\mm{B}{\ell}$ can be completed to a square matrix with determinant $1$ inserting
the row vectors $e_1,\dots,e_d$ of the standard basis $(e_i)_{1\leq i \leq m}$ of $\R^m$, and the
$\zeta_i$ are dual to the $e_i$ in the basis $e_1,\dots,e_d,B_1,\dots,B_\ell$ with respect to the
standard scalar product $\scalar{\cdot}{\cdot}$. Consequently, if $\zeta_{r_1},\dots,\zeta_{r_p}$,
for $1\leq{}r_1<\dots<r_p\leq{}d$, are a subset of the $\zeta_i$ and $1\leq{}s_1<\dots<s_q\leq{}d$
is the complementary set of indices in $1,2,\dots,d$, the Gram determinant of the
$\zeta_{r_1},\dots,\zeta_{r_p}$ is the same as the Gram determinant of the vectors
$e_{s_1},\dots,e_{s_q},B_1,\dots,B_\ell$, which is $\Oh(N^{2\ell})$ for each $N > M(B)$ by Lemma
\ref{lem1}. Therefore the Gram determinant of the $\omega_{r_i}$ can be estimated with
$K|a_d|^{2\ell}N^{2\ell}$ for some constant $K$, and hence with $K M^{2\ell} = K'M^{2m}$ for each $M
> M(A)$.
\end{proof}

It is now possible to give the

\begin{proof}[Proof of Theorem \ref{teo2}]
As in the proof of Theorem \ref{teo1}, put $\ell=m-d$, and let $Q$ be defined as
\[
  Q = Q(A) = \Big\{ v \in \R^m\ :\ \mm{A}{\ell} \cdot v \in \Z^\ell \Big\}.
\]
Let $\omega_1,\dots,\omega_d \in \Z^m$ be any basis of the $d$-dimentional the lattice $\Lambda_m$
of vectors in $\Z^m$ which are killed by $\mm{A}{\ell}$ (i.e. form a linear recurrence determined by
$A(x)$), and let $e_1,\dots,e_m$ be the standard basis of $\R^m$.

Suppose by contradiction $\bar\epsilon < \epsilon < 1/M(A(x))$ and that $Q$ is
$\bar\epsilon$-dense, independently of $m$. Since $\epsilon > \bar\epsilon$ we have
$\pi_m(Q+[0,\epsilon]^m) = \T^m$, and since $Q$ is the union of integral translates of the
parallelepiped formed by $\omega_1,\dots,\omega_d$ the map $\pi_m : \R^m \rightarrow \T^m$ must be
surjective on the parallelepiped $\Pi$ formed by combinations with coefficients in $[0,1]$ of the
vectors $\epsilon e_1,\dots,\epsilon e_m,\omega_1,\dots,\omega_d$.
%\[
% \Pi = \left\{ \sum_{i=1}^m d_i \cdot \epsilon e_i + \sum_{i=1}^d c_i \cdot \omega_i,\quad
%c_i,d_i\in[0,1] \right\}.
%\]

The map $\pi_m$ locally preserves the volume and the image of $\Pi$ is all $\T^m$, so the volume of
$\Pi$ must be $\geq 1$. But the volume of $\Pi$ can be computed as the sum of the volumes of the
parallelepipeds formed by all choices of $m$ vectors within
$\epsilon{}e_1,\dots,\epsilon{}e_m,\omega_1,\dots,\omega_d$. Note that the volume of the
parallelepiped formed by, say, $\epsilon e_{s_1},\dots,\epsilon
e_{s_q},\omega_{r_1},\dots,\omega_{r_p}$ with $p+q=m$ is not greater than
\[
  \epsilon^q \sqrt{G(\omega_{r_1},\omega_{r_2},\dots,\omega_{r_p})} \leq \epsilon^{m-d}
 \sqrt{G(\omega_{r_1},\omega_{r_2},\dots,\omega_{r_p})},
\]
being $q\geq m-d$ and $\epsilon \leq 1$. The total number of such parallelepipeds is
$\binom{m+d}{m}$, and taking a basis $\omega_i$ of $\Lambda_m$ via Lemma \ref{lem2} the volume of
$\Pi$ can be estimated as
\[
\mathrm{Vol}(\Pi) \leq \sqrt{K} \binom{m+d}{m}\epsilon ^{m-d} M^m.
\]
In particular it $\rightarrow 0$ as $m \rightarrow \infty$ if $M$ is such that
$M(A)<M<1/\epsilon$, as it is possible to choose since we assumed $\epsilon < 1/M(A)$.
\end{proof}

\bibliographystyle{plain}
\bibliography{biblio}

\end{document}